\newtheorem{theorem}{Theorem}[section]
\newtheorem{conjecture}[theorem]{Conjecture}
\newtheorem{corollary}[theorem]{Corollary}
\newtheorem{lemma}[theorem]{Lemma}
\theoremstyle{definition}
\theoremstyle{remark}
\newcommand{\R}{\mathds R}
\newcommand{\F}{\mathcal F}
\DeclareMathOperator{\conv}{conv}
\begin{document}

\title{About an Erd\H{o}s--Gr\"{u}nbaum conjecture concerning piercing of non-bounded convex sets }
\author{Amanda Montejano \and Luis Montejano \and Edgardo Rold\'an-Pensado \and Pablo Sober\'on}
\maketitle
\abstract{In this paper, we study the number of compact sets needed in an infinite family of convex sets with a local intersection structure to imply a bound on its piercing number, answering a conjecture of
 Erd\H{o}s and Gr\"unbaum. Namely, if in an infinite family of convex sets in $\R^d$ we know that out of every $p$ there are $q$ which are intersecting, we determine if having some compact sets implies a bound on the number of points needed to intersect the whole family. We also study variations of this problem.}

\let\thefootnote\relax\footnotetext{The authors wish to acknowledgement the support of CONACYT under project 166306 and support of PAPITT-UNAM under projects IN112614 and IA102013.}
\let\thefootnote\relax\footnotetext{2010 Mathematics Subject Classification: 52A35, 52C10}
\section{Introduction}

The infinite version of the well known Helly theorem \cite{Hel1923} in the plane states the following: \textit{Given an infinite family of closed convex sets in the plane, one of which is bounded, if every three sets in the family have a common point, then the intersection of all them is non-empty}. Suggested by Erd\H{o}s in 1990, the following conjecture was first published in \cite{bs}: 
\textit{There is a constant $n$ such that, given any infinite family of closed convex sets in the plane, one of which is bounded, if among any four sets there are three with a point in common then there is a finite set $S$ consisting of $n$ points, such that every given set in the family contains at least one point from $S$.}

Eighteen years later, while reading the manuscript of the new edition of \cite{bs} Branko Gr\"{u}nbaum commented that this conjecture does not hold even for the line $\R$. He gave a construction that disproves the conjecture. Namely, define sets in $\R$ as follows: $F_0=\{0\}$, and $F_n=\{x\in \R\mid x\geq n\}$, for any positive integer $n$. Of course, all conditions of the conjecture are satisfied, while for any finite set $S$ of real numbers there is an integer $n$ that is greater than any number from $S$. Thus, by definition, $F_n$ does not contain any element from $S$. In the same year, 2008, Alexander Soifer asked Branko Gr\"{u}nbaum whether he could ``save" the conjecture. Consequently the following revised conjecture was published in \cite{s}.

\begin{conjecture}[Gr\"unbaum 2008]
\label{cjt:false2} There is an integer $n$ such that for any infinite family of closed convex sets in the plane, \emph{two} of which are bounded, if among any four sets there are three with a point in common, then there is a finite set $S$ (consisting of $n$ points), such that every set in the family contains at least one point from $S$.
\end{conjecture}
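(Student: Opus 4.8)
The plan is to fix two bounded members $A,B\in\F$ and to partition the family according to how its members meet $A$ and $B$. Write $\F_A=\{C\in\F: C\cap A=\emptyset\}$, $\F_B=\{C\in\F: C\cap B=\emptyset\}$, and let $\F_{AB}$ be the sets meeting both $A$ and $B$; since every set either misses $A$, or meets $A$ but misses $B$, or meets both, each member of $\F$ lies in at least one of these three subfamilies. I would pierce $\F_A$ and $\F_B$ each with a single point, pierce $\F_{AB}$ with a bounded number of points, and then take the union of the three piercing sets.

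The key observation for $\F_A$ is that the compact set $A$ can be used as a ``catalyst'' that upgrades the $(4,3)$-condition to a full Helly condition. Indeed, for any three sets $C,D,E\in\F_A$, apply the hypothesis to the quadruple $\{A,C,D,E\}$: since $A$ is disjoint from each of $C,D,E$, no intersecting triple can contain $A$, so $C\cap D\cap E\neq\emptyset$. Applying the same reasoning to $\{A,B,C,D\}$ gives $B\cap C\cap D\neq\emptyset$ for all $C,D\in\F_A$, so the family $\F_A\cup\{B\}$ has the property that every three of its members meet and it contains the bounded set $B$. The infinite Helly theorem quoted in the introduction then yields a single point $p_A\in\bigcap\F_A$. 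By symmetry, exchanging the roles of $A$ and $B$ gives a point $p_B$ piercing $\F_B$. This is precisely where both bounded sets are needed: $A$ is the catalyst for $\F_A$ while $B$ supplies the bounded set required by the Helly theorem, and vice versa, which is consistent with Gr\"unbaum's example showing that one bounded set does not suffice.

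It remains to pierce $\F_{AB}$, where the catalyst trick is unavailable and one cannot hope to reduce the piercing number to one. Here I would aim for a bounded piercing number through the Alon--Kleitman $(p,q)$-theorem. Since Helly's theorem for \emph{finite} families requires no boundedness, that theorem provides a constant $c=c(4,3,2)$ such that every finite subfamily of $\F_{AB}$ (indeed of $\F$) is $c$-pierceable. The main obstacle is to pass from finite subfamilies to the whole infinite family: a priori the piercing points of larger and larger finite subfamilies could escape to infinity, which is exactly the mechanism behind the original counterexample. The point of restricting to $\F_{AB}$ is that each of its members meets the fixed compact set $A$, and I expect this to prevent escape, since convexity forces every such set to contain the segment joining its intersection with $A$ to any far-away common point; an escaping sequence of common points would therefore drag a convergent sequence of anchor points inside the compact set $A$ back to a genuine piercing point. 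Turning this into a rigorous compactness/limiting statement---that meeting a common compact set together with uniform finite piercing forces bounded piercing of the infinite family---is the crux of the argument and the step I expect to require the most care.

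Finally, combining the point $p_A$ for $\F_A$, the point $p_B$ for $\F_B$, and the $O(c)$ points obtained for $\F_{AB}$ produces a finite set $S$ meeting every member of $\F$, with $n=|S|$ bounded by an absolute constant; tracking the constants in the $(p,q)$-theorem would yield an explicit value of $n$.
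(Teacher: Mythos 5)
The statement you are trying to prove is false, and the paper's treatment of it is a \emph{disproof}: Conjecture \ref{cjt:false2} was refuted by M\"uller, and Section \ref{section-counterexample} constructs a counterexample (Theorem \ref{thm:ctex}) that works in every dimension, for every $(d+2k,d+k)$-property, and with \emph{infinitely many} bounded members. So no argument along your lines can be completed.

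The step that breaks is exactly the one you flag as requiring the most care: the claim that the subfamily $\F_{AB}$ of sets meeting a fixed compact set must have bounded piercing number because ``an escaping sequence of common points would drag a convergent sequence of anchor points inside the compact set $A$ back to a genuine piercing point.'' The limit of those anchor points need not belong to any member of the family. Concretely, in the plane take the wedges $A_n=\conv\bigl(\{(0,\tfrac1n)\}\cup\{(t,0)\mid t\ge n\}\bigr)$ for $n\ge 2$, together with infinitely many bounded convex sets each containing the segment $\{0\}\times[0,1]$; this is the paper's construction for $d=1$, $k=1$. The $(4,3)$-property holds (Lemma \ref{lem:aub}): four sets with at most one wedge share a point $(0,\tfrac1n)$ of the segment; with exactly two wedges, the later wedge and the two bounded sets share such a point; with three or more wedges, the wedges share a point far out on the $x$-axis. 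Every member meets the compact segment, so for any choice of bounded $A,B$ your subfamilies $\F_A$ and $\F_B$ are empty, $\F_{AB}$ is the whole family, and yet every point of the plane lies in only finitely many wedges (Lemma \ref{lem:pi}), so $\pi(\F_{AB})=\infty$. The anchor points $(0,\tfrac1n)$ do converge --- to the origin --- but the origin lies in no wedge. Your ``catalyst'' analysis of $\F_A$ and $\F_B$ is correct as far as it goes, but it can never be the bottleneck. The paper's positive results identify what extra hypotheses actually rescue the statement: two \emph{disjoint} bounded sets suffice (Theorem \ref{thm:s2}), as does strengthening $(4,3)$ to $(5,4)$ or $(6,5)$ with two bounded sets (Theorems \ref{thm:main} and \ref{thm:s1}).
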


This conjecture was recently disproved by Tobias M\"uller \cite{muller2013counterexample}.  It should be noticed that a natural extension of M\"uller's work refutes the possibility of ``saving'' the conjecture by replacing the condition of \textit{two} bounded sets by any number. In this paper we study Erd\H{o}s' conjecture in the general setting of the $(p,q)$-problem of Hadwiger and Debrunner (see \cite{E, HD1957}), for infinite families of closed convex sets in $\R^d$. We say that a family $\F$ with at least $p$ closed convex sets in $\R^{d}$ satisfies the \emph{$(p,q)$-property} if among any $p$ sets in the family there are $q$ of them with a point in common. The \emph{piercing number}, $\pi (\F)$, is the minimum cardinality of a set $S\subset \R^d$, such that every set in the family contains at least one point from $S$. If there is no finite set intersecting the whole family, we simply say $\pi (\F) = \infty$. Hence the classical Helly theorem can be restated as follows:

\begin{theorem}[Helly, 1923]
\label{thm:helly} Let $\F$ be an infinite family of closed convex sets in $\R^{d}$, one of which is bounded. If $\F$ satisfies the $(d+1,d+1)$-property then $\pi (\F)=1$.
\end{theorem}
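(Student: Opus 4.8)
The plan is to combine the classical \emph{finite} Helly theorem with a compactness argument built on the single bounded member of $\F$. First I would recall that finite Helly asserts that a finite family of convex sets in $\R^d$ in which every $d+1$ members have a common point must itself have nonempty total intersection. Since the $(d+1,d+1)$-property is manifestly inherited by every subfamily of $\F$, any finite subfamily $\mathcal{G}\subseteq\F$ satisfies the hypothesis of finite Helly, and therefore $\bigcap\mathcal{G}\neq\emptyset$. This reduces the entire combinatorial content of the theorem to the classical finite statement.

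Next I would exploit the bounded set to pass from finite to infinite intersections. Let $B\in\F$ be the bounded member; being closed and bounded, $B$ is compact. Consider the collection $\{B\cap F : F\in\F\}$ of closed subsets of $B$. For any finite choice $F_1,\dots,F_k\in\F$, the set $B\cap F_1\cap\cdots\cap F_k$ is exactly the intersection of the finite subfamily $\{B,F_1,\dots,F_k\}$, which is nonempty by the previous step. Hence $\{B\cap F : F\in\F\}$ is a family of closed subsets of the compact set $B$ possessing the finite intersection property.

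Finally, by the compactness of $B$, any family of closed subsets of $B$ with the finite intersection property has nonempty total intersection. Therefore $\bigcap_{F\in\F}(B\cap F)=\bigcap_{F\in\F}F\neq\emptyset$. Any point $p$ lying in this intersection belongs to every member of $\F$, so the singleton $S=\{p\}$ pierces the whole family; since $\F$ is nonempty this forces $\pi(\F)=1$ exactly.

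I expect the argument to be almost entirely bookkeeping once the two standard tools---finite Helly and the finite intersection property for compact sets---are assembled. The only delicate point, and the genuine role of the boundedness hypothesis, is the compactness step: without a compact set to restrict to, the finite intersection property alone does \emph{not} force a nonempty intersection, as the line example $F_n=\{x\ge n\}$ from the introduction shows. I therefore regard that step, rather than any computation, as the conceptual crux of the proof.
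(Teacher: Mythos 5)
Your proof is correct: reducing to finite Helly via the $(d+1,d+1)$-property and then using the finite intersection property of the closed sets $B\cap F$ inside the compact bounded member $B$ is exactly the standard argument, and you correctly identify the compactness step as the place where boundedness is indispensable. The paper itself states this theorem as a classical result with a citation and gives no proof, so there is nothing to compare against; the only point you elide is that for finite subfamilies with fewer than $d+1$ members one should first enlarge them within the infinite family before invoking finite Helly, a triviality.
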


Hadwiger and Debrunner conjectured that the $(p,q)$-property should be enough to bound the piercing number of a finite family of convex sets, which was later confirmed by Alon and Kleitman \cite{AK}. The following theorem is now commonly known as the $(p,q)$-theorem.

\begin{theorem}[Alon, Kleitman 1992]\label{thm-alonkleitman}
Given positive integers $p \ge q \ge d+1$, there is a constant $c=c(p,q,d)$ such that every finite family $\F$ of closed convex sets in $\R^d$ with the $(p,q)$-property satisfies $\pi( \F) \le c$.
\end{theorem}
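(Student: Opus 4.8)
The plan is to reproduce the strategy of Alon and Kleitman, which bounds the piercing number $\pi(\F)=\tau(\F)$ by passing through its fractional relaxation. The three ingredients are a Tur\'an-type counting argument, the fractional Helly theorem of Katchalski--Liu, and the weak $\varepsilon$-net theorem for convex sets. First I would use the $(p,q)$-property, together with the hypothesis $q\ge d+1$, to force a positive density of intersecting $(d+1)$-tuples. Indeed, among any $p$ members of $\F$ the property supplies $q$ sets with a common point, and since $q\ge d+1$ every $(d+1)$-subfamily of those $q$ sets is then intersecting, so each $p$-block contains at least $\binom{q}{d+1}$ intersecting $(d+1)$-tuples. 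Double-counting incidences between $p$-subsets and the $(d+1)$-tuples they contain, and using $\binom{n}{p}/\binom{n-d-1}{p-d-1}=\binom{n}{d+1}/\binom{p}{d+1}$, yields that at least $\alpha\binom{n}{d+1}$ of the $(d+1)$-tuples of $\F$ are intersecting, where $\alpha=\binom{q}{d+1}/\binom{p}{d+1}>0$ depends only on $p,q,d$ and $n=|\F|$.

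Next I would invoke the fractional Helly theorem: whenever at least $\alpha\binom{n}{d+1}$ of the $(d+1)$-tuples of an $n$-family of convex sets in $\R^d$ are intersecting, some $\beta n$ of the sets share a common point, for a constant $\beta=\beta(\alpha,d)>0$. The aim of this stage is to bound the fractional piercing number $\tau^{*}(\F)$. By LP duality, $\tau^{*}(\F)$ equals the fractional matching number $\nu^{*}(\F)$, so it suffices to bound any fractional matching, i.e.\ any weights $w_i\ge 0$ on the sets with $\sum_{i:\,x\in C_i}w_i\le 1$ for every $x\in\R^d$ (note each $w_i\le1$). Clearing denominators turns such a weighting into a multiset of convex sets in which intersecting $(d+1)$-tuples remain at least $\alpha$-dense (repeated sets only create more of them), so the counting of the first step feeds the fractional Helly theorem and produces a point lying in a $\beta$-fraction of the multiset by weight. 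The matching constraint at that point caps the total weight, giving $\nu^{*}(\F)\le 1/\beta$, and hence $\tau^{*}(\F)=\nu^{*}(\F)\le 1/\beta=:K$, a constant depending only on $p,q,d$.

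The decisive step converts this fractional bound into an integral one. An optimal fractional transversal of value $\tau^{*}(\F)\le K$ is a finitely supported weighting of points meeting every set to total weight at least $1$; after normalizing its total mass it becomes a probability measure $\mu$ on $\R^d$ with $\mu(C)\ge 1/\tau^{*}(\F)\ge 1/K$ for every $C\in\F$. Here I would apply the weak $\varepsilon$-net theorem for convex sets with $\varepsilon=1/K$: there is a set $S\subset\R^d$ whose cardinality is bounded by a function $f(\varepsilon,d)$ \emph{independent of $\mu$ and of $\F$}, meeting every convex set of $\mu$-measure at least $\varepsilon$. Since each $C\in\F$ satisfies $\mu(C)\ge 1/K$, the finite set $S$ pierces all of $\F$, whence $\pi(\F)\le f(1/K,d)=:c(p,q,d)$.

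The main obstacle is the weak $\varepsilon$-net theorem itself, namely the existence of weak nets of size bounded by a function of $\varepsilon$ and $d$ alone. Its proof rests on a point-selection (first-selection) lemma of B\'ar\'any--Boros--F\"uredi type, derived from Tverberg's theorem or from iterated Radon partitions, which guarantees a single point contained in a positive fraction of all simplices spanned by the support of $\mu$; iterating this selection builds the net. By contrast, the counting of the first step and the duality of the second are comparatively routine once the fractional Helly theorem and the weak $\varepsilon$-net theorem are available.
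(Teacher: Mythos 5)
The paper does not prove this statement: it is quoted verbatim as the Alon--Kleitman $(p,q)$-theorem and used as a black box (only the infinite version, Theorem 3.1, is derived from it via K\"onig's lemma), so there is no internal proof to compare against. Your outline correctly reproduces the architecture of Alon and Kleitman's original argument --- the Tur\'an-type double count giving an $\alpha=\binom{q}{d+1}/\binom{p}{d+1}$ density of intersecting $(d+1)$-tuples, fractional Helly to bound every fractional matching, LP duality to pass to the fractional transversal number, and the weak $\varepsilon$-net theorem to convert the resulting measure into a bounded point transversal --- and the quantitative claims are accurate at the level of a sketch.
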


For the rest of the paper, we will denote by $\xi (p,q,d)$ the smallest possible value for the constant $c(p,q,d)$ of the theorem above. In this setting, it is natural to ask how many compact sets are necessary for the theorem above to hold for infinite families. Alon and Kleitman also proved an infinite version of Theorem \ref{thm-alonkleitman}, which we mention in Section \ref{section-disjuntos}.

Gr\"unbaum's example also shows that at least $p-q+1$ compact sets are necessary for the $(p,q)$-theorem to hold, just by taking $p-q$ copies of $F_0$ instead of only one. In this paper we characterise the triples $(p,q,d)$ such that $p-q+1$ compact sets in $\R^d$ are sufficient to imply the $(p,q)$-theorem for infinite families. Our main result is the following:

\begin{theorem}\label{thm:main} Let $p \geq q \geq d+1$ be positive integers.
\begin{itemize}
\item[i)] If $q \geq p-q +(d+1)$ and $\F$ is a family of closed convex sets in $\R{
R}^{d}$ containing at least $p-q+1$ bounded members and satisfying the $(p,q)$-property, then 
$$\pi(\F) \leq \xi(q-1,d,d-1)\xi(p,q,d) +p-q+1,$$ where $\xi (p,q,d)$ are the $(p,q)$-theorem bounds, and

\item[ii)] if $q < p-q +(d+1)$, then there is a family $\F$ of closed convex sets in $\R{
R}^{d}$, containing infinitely many bounded members, satisfying the $(p,q)$-property and such that $\pi(\F)=\infty$.
\end{itemize}
\end{theorem}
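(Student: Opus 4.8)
The plan is to read both cases as the two sides of a single counting inequality governed by the \emph{excess} $e := 2q-p = q-(p-q)$, which under the hypothesis of i) reads $e \ge d+1$ and under that of ii) reads $e \le d$.

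For part i), I would first fix $p-q+1$ bounded members $B_1,\dots,B_{p-q+1}$ and set $K := \conv(B_1\cup\cdots\cup B_{p-q+1})$, a compact convex body. The engine is the following localization: for any $q-1$ further sets $A_1,\dots,A_{q-1}\in\F$, the $p$ sets $B_1,\dots,B_{p-q+1},A_1,\dots,A_{q-1}$ contain, by the $(p,q)$-property, a $q$-tuple with a common point $x$. Since only $q-1$ of these are among the $A_j$, the $q$-tuple must use at least one $B_i$ and at least $q-(p-q+1)=e-1\ge d$ of the $A_j$; hence $x\in B_i\subseteq K$ and at least $d$ of the $A_j$ pass through $x\in K$. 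Reading this off, the family $\F_0:=\F\setminus\{B_1,\dots,B_{p-q+1}\}$ satisfies a $(q-1,d)$-property whose witnessing intersection points are confined to the compact set $K$. This is exactly where the hypothesis $q\ge p-q+(d+1)$, i.e. $e\ge d+1$, is used and where it is sharp.

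Next I would convert this localized $(q-1,d)$-property into the stated product bound by a two-level piercing. Because all witnessing points lie in the fixed compact $K$ while the members of $\F_0$ may run off to infinity, the natural move is to pass to the behaviour at infinity — the recession cones of the unbounded members, traced on a sphere enclosing $K$ — which lowers the ambient dimension by one and turns the $(q-1,d)$-property into a $(q-1,d)$-property for a family in (a chart of) $\R^{d-1}$. Applying the finite $(p,q)$-theorem there (Theorem \ref{thm-alonkleitman}) would produce at most $\xi(q-1,d,d-1)$ directions covering all of $\F_0$; grouping $\F_0$ accordingly and applying the finite $(p,q)$-theorem in $\R^d$ to each group yields at most $\xi(p,q,d)$ points per group, and one point per bounded set accounts for the remaining $p-q+1$. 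Together this gives $\pi(\F)\le \xi(q-1,d,d-1)\,\xi(p,q,d)+(p-q+1)$. The main obstacle is precisely this reduction: one must make rigorous that the compactness of $K$ lets the $(q-1,d)$-property descend to dimension $d-1$, and that the two piercings compose into a genuine product without double counting or losing finiteness of the groups. Everything else is bookkeeping layered on top of the $(p,q)$-theorem.

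For part ii) I would exhibit an explicit family generalizing the examples of Gr\"unbaum and of M\"uller \cite{muller2013counterexample}; note that the planar case $(p,q,d)=(4,3,2)$ already lands here, since then $e=2\le d$. The construction places infinitely many bounded convex sets that march off to infinity in $\R^d$, organized so that (1) any $p$ of them, aided by a fixed finite collection of unbounded sets, contain $q$ through a common point, and (2) no finite point set meets them all, since they escape along an unbounded arrangement that leaves infinitely many sets unpierced. The inequality $e\le d$, equivalently $e-1\le d-1$, is exactly the feasibility condition: it says we are \emph{not} forced to realize $d$ simultaneous concurrences (as we were in part i)), so the required concurrence pattern can be realized in the available $d$ dimensions while still defeating every finite transversal. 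The crux — as in M\"uller's work — is to make requirements (1) and (2) coexist; I would assemble the concurrences from a bounded combinatorial gadget repeated along a curve tending to infinity, and then verify the $(p,q)$-property by a short case analysis driven by the bound $e\le d$.
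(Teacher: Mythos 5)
Your opening step for part i) is right and matches the paper: adding the $p-q+1$ bounded sets to any $q-1$ further members and counting shows that the intersecting $q$-tuple uses at least one $B_i$ and at least $2q-p-1\ge d$ of the $A_j$, so $\F_0$ acquires a $(q-1,d)$-property witnessed inside the compact set $K$; this is exactly the counting inside Lemma \ref{lem:pixi}. The conversion into the product bound is where your argument breaks. The $(q-1,d)$-property you derived is about intersections of the truncations $A_j\cap K$; it transfers nothing to the recession cones, so the family of cone-traces on a sphere is not known to satisfy any $(p,q)$-type property (and bounded members of $\F_0$ have empty traces, so they cannot be ``covered by a direction'' at all). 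Worse, after grouping by direction you propose to apply the finite $(p,q)$-theorem to each \emph{infinite} group in $\R^d$ to get $\xi(p,q,d)$ points per group --- but that is essentially the statement being proved, and a common recession direction alone does not bound the piercing number (Gr\"unbaum's sets $[n,\infty)$ all share one). The paper runs the two levels in the opposite order: it first applies the infinite partition form of Alon--Kleitman (Theorem \ref{thm-AKinfinito}) to split $\F\setminus\{B_1,\dots,B_{p-q+1}\}$ into $\xi(p,q,d)$ subfamilies, each with the $(d+1,d+1)$-property; for such a subfamily Lemma \ref{lem:v} produces a \emph{single} recession direction $v$ common to all members, and Corollary \ref{cor} shows that orthogonal projection along $v$ preserves the intersection pattern of the sets $A\cap K$. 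Only then does the localized $(q-1,d)$-property descend to compact convex sets in $\R^{d-1}$, where the second parameter $d=(d-1)+1$ makes Alon--Kleitman applicable and gives $\xi(q-1,d,d-1)$ points per subfamily. (You cannot invoke Alon--Kleitman in $\R^d$ with a $(q-1,d)$-property since $d<d+1$; the drop in dimension is forced, and your mechanism for achieving it is the missing step.)

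For part ii) your architecture is backwards. If the infinitely many \emph{bounded} sets are the ones marching off to infinity, then choosing $p$ of them pairwise far apart already violates the $(p,q)$-property --- auxiliary unbounded sets not among the chosen $p$ are irrelevant to that $p$-tuple --- so such bounded sets must cluster, and then they do not defeat finite transversals. In the paper's construction (Theorem \ref{thm:ctex}) it is the \emph{unbounded} sets $A_n=\conv(S_{\alpha_n}\cup I_n)$ that make $\pi=\infty$ (each point of $\R^d$ lies in only finitely many of them, Lemma \ref{lem:pi}), while the infinitely many bounded sets all contain a fixed cube; the $(d+2k,d+k)$-property is verified by the case analysis of Lemma \ref{lem:aub}, which rests on two nontrivial facts: any $d$ of the simplices $S_{\alpha}$ have a common point lying in the cube (Lemma \ref{lem:simplex}), and all the $A_n$ meet far out along a common ray. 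Your sketch neither supplies such a construction nor could the one it gestures at satisfy the hypotheses.
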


If we denote by $k$ the value of $p-q$, the theorem above can be restated as saying that for a family with the $(d+2k+1,d+k+1)$-property, having $k+1$ compact sets is enough to bound the piercing number, and for the $(d+2k,d+k)$-property no number of compact sets is enough. The proof of the positive part of Theorem \ref{thm:main} is in Section \ref{section-mainthm}.

In order to prove the negative part, we exhibit in Section \ref{section-counterexample} an infinite family $\F$ of closed convex sets in $\R^{d}$, infinitely many of  which are bounded, satisfying simultaneously the $(d+2k,d+k)$-property for every non-negative $k$ and for which the piercing number $\pi (\F)$ is infinite. In particular, for $d=2$, this construction is equivalent to M\"uller's counterexample \cite{muller2013counterexample}, disproving Conjecture~\ref{cjt:false2}.

Nevertheless, insisting on somehow \textquotedblleft saving" the spirit of Conjecture \ref{cjt:false2}, in Section \ref{section-disjuntos} we obtain positive results if some of the compact sets in $\F$ have a special separation structure.
Consider the following definition, we say that a family of convex sets in $\R^d$ is $m$-free if no $(m+1)$-tuple is intersecting and all its elements are compact. If $d \ge k$, one example of a $k$-free family of arbitrary size in $\R^d$ is to take a set of $(k-1)$-dimensional flats in general position intersected with a a large enough compact ball. Moreover, if these are all contained in a $k$-dimensional flat, then all $k$-tuples intersect while no $(k+1)$-tuples do.

\begin{theorem}
\label{thm:s2} Let $p\ge q \ge d+1$ be positive integers. If $\F$ is an infinite family of closed, convex sets in $\R^d$ with the $(p,q)$-property such that it contains a $(q-d)$-free family of size $p-d$, then $$\pi (\F) \le \xi(p,q,d) + p-q+1.$$
\end{theorem}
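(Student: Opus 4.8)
The plan is to use the $(q-d)$-free family to force all the relevant intersections into a fixed compact ball, and then run the finite $(p,q)$-theorem together with a compactness argument. Write $\mathcal{G} = \{G_1, \dots, G_{p-d}\} \subseteq \F$ for the given $(q-d)$-free family and fix a closed ball $B$ with $\bigcup_i G_i \subseteq B$. The first and most important step is a counting observation: \emph{any $d$ sets $A_1, \dots, A_d \in \F \setminus \mathcal{G}$ have a common point lying in $B$}. Indeed, $\mathcal{G} \cup \{A_1, \dots, A_d\}$ consists of exactly $p$ sets, so by the $(p,q)$-property some $q$ of them meet; since $\mathcal{G}$ is $(q-d)$-free, at most $q-d$ of these lie in $\mathcal{G}$, which forces all of $A_1, \dots, A_d$ together with exactly $q-d$ members of $\mathcal{G}$ through a common point, and that point necessarily lies in $B$. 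Taking one such $d$-tuple through each set shows that, in fact, every member of $\F$ meets $B$.

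Next I would pass to the truncated family $\widehat{\F} = \{A \cap B : A \in \F\}$, a family of compact convex sets all contained in $B$, and show that it inherits the $(p,q)$-property. The obstacle here is precisely the phenomenon behind Gr\"unbaum's counterexample: a $q$-tuple guaranteed by the $(p,q)$-property of $\F$ might only meet far outside $B$. To rule this out, take any $q$ sets $C_1, \dots, C_q$ of $\F$ with $C_1 \cap \dots \cap C_q \neq \emptyset$ and apply Helly's theorem to the $q+1$ convex sets $C_1, \dots, C_q, B$. Every $(d+1)$-subset made of $d$ of the $C_i$ together with $B$ meets, either by the counting step or trivially when one of these $C_i$ lies in $\mathcal{G} \subseteq B$; every $(d+1)$-subset among the $C_i$ meets because it contains the nonempty set $C_1 \cap \dots \cap C_q$. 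Hence $C_1 \cap \dots \cap C_q \cap B \neq \emptyset$. Combined with the $(p,q)$-property of $\F$, this shows that $\widehat{\F}$ satisfies the $(p,q)$-property.

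Finally I would close with a compactness reduction. Every finite subfamily of $\widehat{\F}$ has the $(p,q)$-property, so by the Alon--Kleitman theorem it is pierced by at most $\xi(p,q,d)$ points. Since all members of $\widehat{\F}$ lie in the compact set $B$, the closed conditions ``$(x_1, \dots, x_{\xi(p,q,d)}) \in B^{\xi(p,q,d)}$ pierces $C$'' have the finite intersection property, and compactness of $B^{\xi(p,q,d)}$ produces a single piercing set of size $\xi(p,q,d)$ for all of $\widehat{\F}$. As a point of $A \cap B$ pierces $A$, this pierces $\F$ and yields $\pi(\F) \le \xi(p,q,d)$, which is within the asserted bound $\xi(p,q,d)+p-q+1$; the extra $p-q+1$ points give slack that would, for instance, absorb a cruder treatment of $\mathcal{G}$ if one preferred to avoid the truncation trick.

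The hard part is the middle step. The compactness principle used at the end genuinely fails for unbounded families — this is exactly where infinite families acquire infinite piercing number — so everything hinges on the $(q-d)$-free family confining the problem to the compact ball $B$. The crux is therefore verifying both that truncation to $B$ preserves the $(p,q)$-property (via the Helly argument above) and that it captures every set of $\F$ (via the counting step).
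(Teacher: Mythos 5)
Your proof is correct, but it follows a genuinely different route from the paper's, and it actually proves the stronger bound $\pi(\F)\le \xi(p,q,d)$. The paper removes the $(q-d)$-free family $\mathcal B$, applies the \emph{infinite} Alon--Kleitman theorem (Theorem \ref{thm-AKinfinito}) to partition $\F\setminus\mathcal B$ into $\xi(p,q,d)$ subfamilies with the $(d+1,d+1)$-property, pierces each with a single point via Lemma \ref{lem:critical} (Helly applied to the subfamily together with the convex hull of $\mathcal B$, which supplies the needed bounded set), and then spends $p-q+1$ extra points on $\mathcal B$ itself --- hence the additive term in the stated bound. You instead confine the whole problem to a ball $B\supseteq\bigcup\mathcal G$: your counting step (any $d$ members of $\F\setminus\mathcal G$ together with $\mathcal G$ form a $p$-tuple whose intersecting $q$-tuple must contain all $d$ of them plus $q-d\ge 1$ members of $\mathcal G$, so they meet inside $B$) combined with the finite Helly argument shows that truncation by $B$ preserves the $(p,q)$-property, after which the finite $(p,q)$-theorem plus compactness of $B^{\xi(p,q,d)}$ finishes the job in one stroke, with no separate treatment of $\mathcal G$. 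All the steps check out, including the case analysis in the Helly step when some of the $C_i$ lie in $\mathcal G$. What your approach buys is precisely the improvement the paper attributes to M\"uller in the case $(p,q,d)=(4,3,2)$, namely dropping the $+\,p-q+1$; what the paper's approach buys is reuse of machinery (Theorem \ref{thm-AKinfinito} and Lemma \ref{lem:critical}) already developed for Theorem \ref{thm:main}, where no compact ball is available to truncate against.
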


For example, with $(p,q,d)=(4,3,2)$, the theorem above says that in order to save Conjecture \ref{cjt:false2}, it is sufficient to have two \emph{disjoint} convex compact sets in the family.  This case was also noticed by M\"uller \cite{muller2013counterexample}, although he obtained a slightly stronger bound for the piercing number.  Namely, his work shows $\pi(\mathcal{F}) \le \xi(4,3,2)$.  If $d \ge k$, the construction for a $k$-free family mentioned above shows that Theorem \ref{thm:s2} implies bounds for the piercing number for special families with the $(d+2k,d+k)$-property (the breaking point of Theorem \ref{thm:main}).

One thing to note from the $(p,q)$-theorem is that the bounds obtained for $\xi(p,q,d)$ in \cite{AK} are astronomical. However, when they conjectured the $(p,q)$-theorem, Hadwiger and Debrunner showed that if $p$ and $q$ are large enough, then $\pi(\F) \le p-q+1$ (the best possible bound we could hope for). In the same spirit, in section \ref{section-erdosgallai} we show that if $p, q$ are large enough, then $p-q+1$ compact sets are enough to obtain the same bound on the piercing number for infinite families with the $(p,q)$-property. Namely,

\begin{theorem}
\label{thm:s1} Let $\F$ be an infinite family of closed convex sets in $\R^{d}$ containing at least $t+1$ bounded members.  Suppose $p-t \ge d+1$.  If $\F$ satisfies the $(p,p-t)$-property, for $p\geq \eta (d+1,t+1)$ then $\pi (\F)\leq t+1$, where $\ \eta (d+1,t+1)$ are the Erd\H{o}s--Gallai numbers.
\end{theorem}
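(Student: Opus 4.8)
The plan is to reduce the statement to the finite Hadwiger--Debrunner theorem and then lift the conclusion to the infinite family by a compactness argument, using the $t+1$ bounded members to confine the relevant common point to a fixed compact region. Write $t=p-q$ and let $K_0,\dots,K_t$ denote $t+1$ bounded members of $\F$; set $B=\conv(K_0\cup\dots\cup K_t)$, a compact convex set. Since $q=p-t\ge d+1$, Helly's theorem is available for $(d+1)$-tuples throughout.

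The finite input I would use is the sharp form of the Hadwiger--Debrunner theorem in the Erd\H{o}s--Gallai regime $p\ge\eta(d+1,t+1)$, in the following structural guise: from any \emph{finite} subfamily with the $(p,p-t)$-property one can delete at most $t$ sets so that in the remaining subfamily every $d+1$ sets have a common point. I apply this to an arbitrary finite subfamily $\F'\subseteq\F$ that contains $K_0,\dots,K_t$ and has at least $p$ members. Deleting at most $t$ sets leaves at least one bounded set $K_i$ in the residual subfamily $\mathcal R$; since every $d+1$ members of $\mathcal R$ intersect, Helly's theorem gives a point $z\in\bigcap\mathcal R$, and $z\in K_i\subseteq B$. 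Thus $z$ lies in $B$ and belongs to all but at most $t$ members of $\F'$. This is exactly where having $t+1$ (rather than fewer) bounded sets is essential: it guarantees a bounded set survives the deletion of $\le t$ sets and pins the common point inside $B$.

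To pass to the infinite family, for each finite $\F'\supseteq\{K_0,\dots,K_t\}$ with at least $p$ members let $Z(\F')$ be the set of points $z\in B$ lying in all but at most $t$ members of $\F'$. Each $Z(\F')$ is a closed subset of the compact set $B$ (a finite union, over the choices of the $\le t$ exceptional sets, of intersections of the closed sets, met with $B$), and it is nonempty by the previous paragraph. Moreover $Z(\F'_1)\cap\dots\cap Z(\F'_m)\supseteq Z(\F'_1\cup\dots\cup\F'_m)\neq\emptyset$, so the system has the finite intersection property, and compactness of $B$ yields a point $z^\ast\in\bigcap_{\F'}Z(\F')$. If $z^\ast$ missed $t+1$ sets $C_1,\dots,C_{t+1}$ of $\F$, then taking $\F'=\{K_0,\dots,K_t,C_1,\dots,C_{t+1}\}$ padded with further members of $\F$ to reach size $p$ would give $z^\ast\notin Z(\F')$, a contradiction; hence $z^\ast$ lies in all but at most $t$ members of $\F$. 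Choosing one point in each of these $\le t$ exceptional sets and adding $z^\ast$ produces a piercing set of size at most $t+1$, so $\pi(\F)\le t+1$.

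The main obstacle is the finite input itself: one must verify that in the range $p\ge\eta(d+1,t+1)$ the Hadwiger--Debrunner argument yields not merely $\pi(\F')\le t+1$ but the stronger structural conclusion that deleting at most $t$ sets produces a family with the Helly property. It is precisely this refinement---traceable to the Erd\H{o}s--Gallai estimates defining $\eta$---that makes a surviving common point available and hence confinable to $B$; the weaker piercing bound alone would leave the piercing points free to escape to infinity, as in Gr\"unbaum's example. Once the refined finite statement is in hand, the compactness lifting and the bookkeeping for $Z(\F')$ are routine.
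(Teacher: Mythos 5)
Your proof is correct and is essentially the paper's argument: both hinge on the Erd\H{o}s--Gallai theorem applied to the $(d+1)$-hypergraph on $\F$ whose edges are the non-intersecting $(d+1)$-tuples, yielding that at most $t$ sets can be deleted so that the remainder satisfies the $(d+1,d+1)$-property, after which one of the $t+1$ bounded members survives and pins the Helly point. The ``finite input'' you flag as the main obstacle follows in two lines from the Erd\H{o}s--Gallai theorem as the paper states it: for any subhypergraph $H$ on at most $\eta(d+1,t+1)\le p$ vertices, the $(p,p-t)$-property provides an intersecting $(p-t)$-tuple, and the $\le t$ sets left out of it form a transversal of the bad $(d+1)$-tuples (a bad tuple avoiding all of them would lie inside the intersecting tuple), so $\beta(H)\le t$ and hence $\beta\le t$ globally. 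The only organizational difference is that the paper applies the infinite Erd\H{o}s--Gallai theorem (via K\"onig's lemma) once to the whole family and then invokes the infinite Helly theorem, whereas you apply the finite version to finite subfamilies and recover the infinite conclusion with your own finite-intersection-property argument inside the compact set $B$; both routes are valid and carry the same compactness burden.
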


The definition and history of the Erd\H{o}s--Gallai numbers are given in Section \ref{section-erdosgallai}. Here we just mention that $\eta(3,2)=6$ and recall the bound obtained by  Zsolt Tuza in  \cite{T}
\[
\eta(d+1,t+1) < \binom{d+t+1}{d} + \binom{d+t}{d},
\]

\noindent which implies that $\eta(3,3)\leq 15.$

Concerning convex sets in the plane (case $d=2$) the above results, the statement 78 of the book of Hugo Hadwiger and Hans Debrunner \cite{HD} in which they proved that   $\xi(p,q,1)=p-q+1$, plus the well known fact that  $\xi(4,3,2)\leq 13$ (see  \cite{KGT01, KT}), lead to the following:

\begin{itemize}
\item $(4,3)$-property $+$ infinitely many bounded sets $\not \Rightarrow$ $\pi (\F)< \infty$.
\item $(4,3)$-property $+$ two disjoint bounded sets $\Rightarrow$ $\pi (\F)\le 13 $, by \cite{muller2013counterexample}.
\item $(5,4)$-property $+$ two bounded sets $\Rightarrow$ $\pi (\F)\leq 28 $.
\item $(6,5)$-property $+$ two bounded sets $\Rightarrow$ $\pi (\F)\leq 2$.
\item $(15,13)$-property $+$ three bounded sets $\Rightarrow$ $\pi (\F)\leq 3$.
\end{itemize}

\section{Counterexample to Conjecture \ref{cjt:false2}} \label{section-counterexample}

Here we construct a counterexample to the following more general version of Conjecture \ref{cjt:false2}:\\
\emph{
Given a family $\F$ of closed convex sets in $\R^d$, infinitely many of which are bounded, if among any $d+2k$ sets there are $d+k$ with a point in common, then $\pi(\F)<\infty$.
}

This is false for every $d\ge1$ and $k\ge 0$ and the construction also proves the second part of Theorem \ref{thm:main}. This is summarised in the next theorem.
\begin{theorem}\label{thm:ctex}
There exist infinite families $\mathcal A$ and $\mathcal B$ of convex sets in $\R^{d}$ with the following properties:
\begin{itemize}
\item the piercing number $\pi(\mathcal A)=\infty$,
\item all members of $\mathcal B$ are bounded, and
\item the family $\F=\mathcal A\cup\mathcal B$ satisfies the $(d+2k,d+k)$-property for every $k\ge 0$.
\end{itemize}
\end{theorem}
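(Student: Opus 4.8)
The plan is to separate the two competing demands — infinite piercing and strong local concurrency — into two clusters and then to get the $(d+2k,d+k)$-property from a one-line counting argument. Fix the closed unit ball $D\subset\R^d$. For $\mathcal B$ I would take any infinite family of bounded convex sets each of which \emph{contains} $D$ (for instance the balls of integer radius about the origin); these pairwise intersect and each contains every point of $D$. For $\mathcal A$ I would use an infinite family of unbounded convex sets with three features: (P1) every finite subfamily has a common point; (P2) $\pi(\mathcal A)=\infty$; and (P3) any $d-1$ (hence any fewer) of its members have a common point lying inside $D$. Then $\F=\mathcal A\cup\mathcal B$ will be the required family.

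In the plane this is explicit: index $\mathcal A$ by $(a,M)$ with $a\in\mathbb Q\cap[-1,1]$, $M\in\mathbb Z^{+}$, and put $C_{a,M}=\{(x,y):y\ge M(x-a)^2-\sqrt{1-a^2}\}$, the region above a steep upward parabola whose lowest point $(a,-\sqrt{1-a^2})$ lies on $\partial D$. Finitely many such regions meet high up (fix $x$, take $y$ large), giving (P1); each meets $\partial D$, giving (P3) since here $d-1=1$; and given any finite point set one chooses $a$ off their finitely many abscissae and $M$ so large that the parabola passes above all of them, so no finite point set pierces $\mathcal A$, giving (P2). This is essentially M\"uller's planar example. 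For general $d$ one replaces the parabolas by paraboloidal \emph{troughs}, upper graphs of convex functions vanishing on thin slabs about hyperplanes in the first $d-1$ coordinates, the hyperplanes chosen in sufficiently general position that any $d-1$ of the slabs cross inside $D$ while the troughs still escape to infinity.

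\emph{Verification.} Infinite piercing of $\F$ is immediate from (P2) since $\mathcal A\subseteq\F$, and every member of $\mathcal B$ is bounded by construction. For the $(d+2k,d+k)$-property, take any $d+2k$ members of $\F$, say $a$ from $\mathcal A$ and $b$ from $\mathcal B$ with $a+b=d+2k$. If $b\le k$ then $a\ge d+k$, and (P1) makes these $a$ sets concurrent, so any $d+k$ of them work. If $b\ge k+1$ then $a\le d+k-1$; choose $s=\min(a,d-1)$ of the chosen $\mathcal A$-sets, which by (P3) share a point $p\in D$, and note that every chosen $\mathcal B$-set contains $D\ni p$, so these $s+b$ sets are concurrent. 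A short check gives $s+b\ge d+k$ in both sub-cases: if $a\le d-1$ then $s+b=a+b=d+2k$, and if $d-1\le a\le d+k-1$ then $b\ge k+1$ and $s+b=d-1+b\ge d+k$. Thus $d+k$ of the sets always have a common point, which is exactly the $(d+2k,d+k)$-property; the case $k=0$ recovers that $\F$ is $d$-wise intersecting.

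The main obstacle is the construction of $\mathcal A$ in dimensions $d\ge 3$, where (P2) and (P3) pull in opposite directions: infinite piercing forces the sets to spread out and escape every compact set, whereas (P3) forces the $(d-1)$-wise intersections to stay inside the fixed ball $D$. The resolution, and the technical heart of the argument, is that curvature reconciles them — a family of paraboloidal regions can be finitely concurrent (common points high up) yet pointless with infinite piercing, while the common valleys of the defining convex functions can be arranged to be $(d-1)$-wise intersecting low down in $D$. Choosing the hyperplanes (equivalently, the slabs) in $\R^{d-1}$ in general enough position that every $d-1$ of them cross inside $D$ while the associated troughs still admit no finite piercing set is the one place requiring genuine estimates; everything else reduces to the clean counting above.
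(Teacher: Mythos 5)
Your reduction of the theorem to three properties of the unbounded family $\mathcal A$ --- (P1) every finite subfamily is intersecting, (P2) $\pi(\mathcal A)=\infty$, (P3) every $(d-1)$-tuple meets inside a fixed compact set contained in all members of $\mathcal B$ --- is exactly the structure of the paper's proof, and your case analysis on $a+b=d+2k$ is correct and essentially identical to the paper's Lemma \ref{lem:aub} (the paper intersects at infinity along a common ray direction when $a\ge d+k$, and uses the $(d-1)$-wise intersections inside the unit cube otherwise). Your planar family of parabolic regions is a complete and correct instance for $d=2$.

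The gap is that for $d\ge 3$ you have not actually produced a family satisfying (P2) and (P3) simultaneously; you name this as ``the one place requiring genuine estimates'' and then stop, but this is precisely the mathematical content of the theorem --- everything else is bookkeeping. Your proposed route (hyperplanes of $\R^{d-1}$ in ``sufficiently general position'' so that every $(d-1)$-tuple meets inside $D$ while the troughs admit no finite transversal) is exactly where the tension lies: generic arrangements of infinitely many hyperplanes have unbounded $(d-1)$-wise intersection points, while arrangements forcing those intersections into a fixed ball tend to concentrate near a common flat and risk becoming finitely pierceable; no estimates are given to show both can hold at once. The paper resolves this with a specific, non-generic construction: the simplices $S_\alpha$ with $k$-th vertex $e_1+\dots+e_{k-1}+\alpha e_k$, for which any $d$ with distinct parameters in $(0,1)$ intersect --- proved by an explicit probabilistic choice of convex-combination coefficients (Lemma \ref{lem:simplex}) --- and the sets $A_n=\conv(S_{\alpha_n}\cup I_n)$ with $\alpha_n=1/n$, whose infinite piercing number is established by an induction on the dimension exploiting that $S_{\alpha_n}$ degenerates to a lower-dimensional face as $n\to\infty$ (Lemma \ref{lem:pi}). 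Without an analogue of these two lemmas, your argument proves the theorem only in the plane.
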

The proof of this theorem follows immediately from lemmas \ref{lem:pi} and \ref{lem:aub} below.\\

In order to define the family $\mathcal A$, we first need the following auxiliary construction.  Denote by $e_1,\dots,e_d$ the standard basis of $\R^d$.
For a given number $0\le\alpha\le 1$, we define $S_\alpha$ as the $(d-1)$-dimensional simplex whose $k$-th vertex is $e_1+\dots+e_{k-1}+\alpha e_k$. In Figure \ref{fig:simp} there are two such simplices in dimensions $2$ and $3$.

\begin{figure}[ht]
\centering
\includegraphics[scale=0.75]{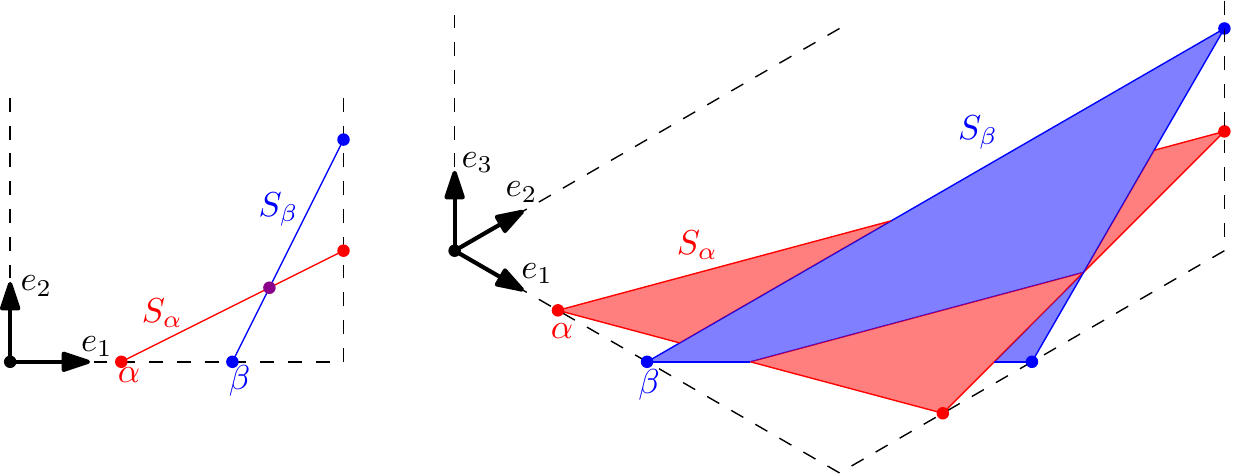}
\caption{Simplexes $S_\alpha$ and $S_\beta$ in $\R^2$ and $\R^3$.}
\label{fig:simp}
\end{figure}

\begin{lemma}\label{lem:simplex}
Given numbers $0 < \alpha_1 \le \dots \le \alpha_d<1$, the simplices $S_{\alpha_1},\dots,S_{\alpha_d}$ intersect.
\end{lemma}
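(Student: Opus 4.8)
The plan is to encode the points of each simplex $S_\alpha$ by a generating polynomial, turn membership in $S_\alpha$ into a divisibility statement, and then produce a single common point as a product of linear factors.

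First I would set up barycentric coordinates. Writing the $k$-th vertex as $v_k = e_1 + \dots + e_{k-1} + \alpha e_k$ and a generic point of $S_\alpha$ as $x = \sum_k \lambda_k v_k$ with $\lambda_k \ge 0$ and $\sum_k \lambda_k = 1$, a direct computation of the $j$-th coordinate (using that $(v_k)_j$ equals $1,\alpha,0$ according as $j<k$, $j=k$, $j>k$) gives $x_j = \alpha T_j + (1-\alpha)T_{j+1}$, where $T_j = \sum_{k\ge j}\lambda_k$ is the non-increasing tail sum, with $T_1 = 1$ and $T_{d+1} = 0$. Thus $x \in S_\alpha$ exactly when $x_j = \alpha T_j + (1-\alpha)T_{j+1}$ for some non-increasing sequence $1 = T_1 \ge \dots \ge T_{d+1} = 0$, i.e.\ for some probability vector $\lambda$.

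Next I would associate to the weights the polynomial $\psi(z) = \sum_{k=1}^d \lambda_k z^{k-1}$, which has nonnegative coefficients, degree $\le d-1$, and $\psi(1)=1$, and set $\phi(z) = (1-\alpha+\alpha z)\psi(z) = \sum_l \nu_l z^l$. A one-line coefficient computation gives $\nu_l = \alpha\lambda_l + (1-\alpha)\lambda_{l+1}$ (with $\lambda_0 = \lambda_{d+1} = 0$), hence $\sum_{l\ge j}\nu_l = \alpha T_j + (1-\alpha)T_{j+1} = x_j$. Consequently, \emph{any} point whose tail sums $x_j = \sum_{l\ge j}\nu_l$ arise from a polynomial $\phi$ that factors as $(1-\alpha+\alpha z)$ times a nonnegative-coefficient polynomial of degree $\le d-1$ summing to $1$ automatically lies in $S_\alpha$. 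This reformulation --- membership in $S_\alpha$ as divisibility by the linear factor $1-\alpha+\alpha z$ with a nonnegative quotient --- is the crux, and finding it is the step I expect to be the main obstacle; everything afterwards is forced.

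Finally I would define $\phi(z) = \prod_{i=1}^d (1-\alpha_i + \alpha_i z)$ and let $c$ be the point with $c_j = \sum_{l\ge j}[z^l]\phi(z)$. For each fixed $i$, factor out the corresponding linear term, $\phi(z) = (1-\alpha_i+\alpha_i z)\,\psi_i(z)$ with $\psi_i(z) = \prod_{l\ne i}(1-\alpha_l+\alpha_l z)$. Since $0<\alpha_l<1$, every factor has nonnegative coefficients, so $\psi_i$ has nonnegative coefficients, has degree $d-1$, and satisfies $\psi_i(1)=1$; by the previous paragraph this exhibits $c$ as a point of $S_{\alpha_i}$. As $i$ was arbitrary, $c \in \bigcap_i S_{\alpha_i}$, proving the lemma. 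I note that the argument uses only $0<\alpha_i<1$ and never the ordering $\alpha_1 \le \dots \le \alpha_d$; the remaining bookkeeping (that $\phi$ and each $\psi_i$ represent genuine probability vectors) is routine from the fact that a product of polynomials with nonnegative coefficients again has nonnegative coefficients.
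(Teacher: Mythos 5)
Your proof is correct and constructs exactly the same common point as the paper: the coefficients of $\prod_i(1-\alpha_i+\alpha_i z)$ are the probabilities that exactly $l$ of $d$ independent events with probabilities $\alpha_i$ occur, so your tail sums $c_j$ are precisely the paper's ``probability that at least $j$ events occur,'' and your quotient polynomials $\psi_i$ encode the paper's convex-combination coefficients $c_k$. The only difference is language --- generating polynomials versus probability --- so this is essentially the paper's argument.
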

\begin{proof}
The vertices of $S_\alpha$ arranged as rows in a matrix are
$$M_\alpha=
\begin{pmatrix}
\alpha & 0 & 0 & \dots & 0 & 0\\
1 & \alpha & 0 & \dots & 0 & 0\\
1 & 1 & \alpha & \dots & 0 & 0\\
\vdots & \vdots & \vdots & \ddots & \vdots & \vdots\\
1 & 1 & 1 & \dots & \alpha & 0\\
1 & 1 & 1 & \dots & 1 & \alpha
\end{pmatrix}.
$$

We use probability to explicitly construct coefficients for a convex combination of these vectors. The vector obtained will be a common point to all the simplexes. To simplify the exposition, we only construct the convex combination for the vertices of $S_{\alpha_d}$, the other cases are analogous.

Assume that $E_1,\dots,E_d$ are independent random events, each $E_i$ occurring with probability $\alpha_i$. For $k=0,\dots,d-1$, let $c_k$ be the probability that exactly $k$ of the first $d-1$ events occur. These are the coefficients for the convex combination, they are clearly non-negative and add up to $1$.

Now we compute the vector
$$\begin{pmatrix}
c_0 & c_1 & \dots & c_{d-1}
\end{pmatrix}M_{\alpha_d}.$$
The $i$-th coordinate of this vector is simply the probability that at least $i$ of the $d$ events occur. Since this is symmetric on $\alpha_1,\dots,\alpha_d$, we are done.
\end{proof}

We are ready to construct the family $\F$. To simplify notation, we construct the example in $\R^{d+1}$ and think of $\R^d\subset\R^{d+1}$ as the subspace with first coordinate equal to $0$. That is, $\R^d=\langle e_{2},\dots,e_{d+1}\rangle$.

Let the family $\mathcal B$ be any infinite family of bounded convex sets that contain the unit cube $[0,1]^d \subset \R^d$.

Let $\alpha_n=\frac 1n$ and consider the sets $S_{\alpha_n}\subset\R^d$. Then, define the rays $I_n=\{te_1\mid t\ge n\}$ and let $\mathcal A$ be the family of sets $A_n=\conv(S_{\alpha_n}\cup I_n)$ with $n\ge 2$. This is represented for $d=1$ in Figure \ref{fig:example}.

\begin{figure}[ht]
\centering
\includegraphics[scale=0.75]{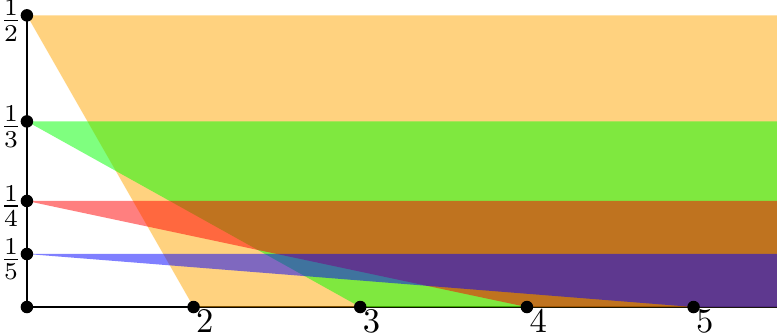}
\caption{The unbounded sets of $\F$ in $\R^2$.}
\label{fig:example}
\end{figure}

\begin{lemma}\label{lem:pi}
The piercing number $\pi(\mathcal A)=\infty$.
\end{lemma}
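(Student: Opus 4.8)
The plan is to prove the stronger statement that every single point of $\R^{d+1}$ belongs to only finitely many members of $\mathcal A$. This immediately yields the lemma: a hypothetical finite piercing set $S$ is a finite union of points, each of which meets only finitely many $A_n$, so $S$ could intersect only finitely many of the infinitely many sets $A_n$ (indexed by $n \ge 2$), a contradiction.

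To carry this out, I would first fix a parametrization of $A_n$. Writing a point of $\R^{d+1}$ as $(x_1,y)$ with $x_1\in\R$ the first coordinate and $y\in\R^d$ the remaining coordinates (the subspace containing $S_{\alpha_n}$), every point of $A_n=\conv(S_{\alpha_n}\cup I_n)$ has the form $\lambda(0,s)+(1-\lambda)(t,0)$ with $s\in S_{\alpha_n}$, $t\ge n$ and $\lambda\in[0,1]$; hence $x_1=(1-\lambda)t$ and $y=\lambda s$. The crucial input is a coordinate estimate on the simplices coming from the lower-triangular shape of $M_{\alpha}$: for any $s\in S_{\alpha_n}$, if $j$ is the largest index with $s_j\neq 0$, then $0<s_j\le \alpha_n$. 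Indeed, reading off $M_{\alpha_n}$, only the last vertex contributes to the last coordinate, only the last two to the penultimate one, and so on; so writing $s=\sum_k\mu_k v_k$ for the vertices $v_k$ and cascading from the top coordinate downward forces $\mu_k=0$ for all $k>j$, whence $s_j=\mu_j\alpha_n\in(0,\alpha_n]$. In particular $0\notin S_{\alpha_n}$.

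With this estimate the finiteness is immediate. If $(x_1,y)\in A_n$ with $y\neq 0$, let $j$ be the index of the top nonzero coordinate of $y$; since $y=\lambda s$ we must have $\lambda>0$ and $s_j\neq 0$, whence $0<y_j=\lambda s_j\le\alpha_n=\tfrac1n$ and therefore $n\le 1/y_j$, so only finitely many $n$ can work. If instead $y=0$, then $\lambda s=0$ with $s\in S_{\alpha_n}$ forces $\lambda=0$ (as $0\notin S_{\alpha_n}$), so the point lies on the ray and $x_1=t\ge n$, again bounding $n\le x_1$. Thus each point lies in only finitely many $A_n$, completing the argument. I expect the main obstacle to be the coordinate estimate itself: one must verify carefully, using the triangular structure of the vertex matrix together with $0<\alpha_n<1$, that the top nonzero coordinate of a point of $S_{\alpha_n}$ is genuinely bounded by $\alpha_n$, and then separately dispatch the degenerate case $y=0$ through the ray $I_n$ rather than through the simplex.
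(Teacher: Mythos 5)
Your proof is correct, but it takes a different route from the paper's. The paper argues by induction on $d$: the base case $d=1$ is read off from the picture, and the inductive step uses the observation that the simplices $S_{\alpha_n}$ converge to $S_0\subset\langle e_2,\dots,e_d\rangle$, so a point with nonzero last coordinate lies in only finitely many $A_n$, while a point with last coordinate $0$ is handled by restricting to $\langle e_1,\dots,e_d\rangle$ and invoking the induction hypothesis. You instead give a direct, non-inductive argument: you parametrize $A_n=\conv(S_{\alpha_n}\cup I_n)$ as $\lambda(0,s)+(1-\lambda)(t,0)$ and prove the explicit coordinate estimate that the top nonzero coordinate of any $s\in S_{\alpha_n}$ lies in $(0,\alpha_n]$ (by cascading down the lower-triangular vertex matrix $M_{\alpha_n}$), which immediately bounds $n$ by $1/y_j$ when $y\neq 0$ and by $x_1$ when $y=0$. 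Your computation is sound: the cascade correctly forces $\mu_k=0$ for $k>j$, hence $s_j=\alpha_n\mu_j\in(0,\alpha_n]$ and $0\notin S_{\alpha_n}$, and the degenerate case $y=0$ is correctly routed through the ray. What your approach buys is precision: it makes quantitative the step that the paper handles by the somewhat informal appeal to ``$S_n$ tends to $S_0$'' (which, to be fully rigorous, needs essentially the estimate you prove, applied to $A_n$ rather than just $S_{\alpha_n}$), and it avoids the induction entirely. The paper's approach buys brevity and a geometric picture that generalizes the visible $d=1$ case.
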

\begin{proof}
It is enough to show that any point $P$ in $\R^{d+1}$ is contained in a finite number of elements of $\mathcal A$. We show this by induction on $d$.

The case $d=1$ corresponds to Figure \ref{fig:example}. Let $P=(x,y)$. If $y\neq0$ then $P$ is not contained in any $S_n$ with $\frac 1n<\lvert y\rvert$. If $y=0$ then $P$ is not contained in any $S_n$ with $n>x$. In both cases $P$ is only in finitely many elements of $\mathcal A$.

Now assume that $d>1$ and let $P\in\R^{d+1}$. Note that the simplices $S_n$ tend to $S_0$, which is contained in $\langle e_2,\dots,e_d\rangle$.
Therefore, if the last coordinate of $P$ is not $0$ it can only be contained in finitely many elements of $\mathcal A$. If it is $0$, then by restricting to $\langle e_1,\dots,e_d\rangle$ we have the configuration corresponding to $d-1$. So by the induction hypothesis $P$ is contained only in finitely many elements of $\mathcal A$.
\end{proof}

\begin{lemma}\label{lem:aub}
The family $\F=\mathcal A\cup\mathcal B$ in $\R^{d+1}$ satisfies the $(d+1+2k,d+1+k)$-property for every $k\ge 0$.
\end{lemma}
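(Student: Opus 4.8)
The plan is to exploit the two genuinely different ways in which members of $\F$ can share a point: a \emph{bounded} intersection inside the unit cube, available to the sets of $\mathcal B$, and an \emph{unbounded} intersection far out along the $e_1$-axis, available only to the sets of $\mathcal A$. First I would record three elementary facts. (1) Since every member of $\mathcal B$ contains $[0,1]^d$, any subfamily of $\mathcal B$ has the whole cube in common. (2) Each simplex satisfies $S_{\alpha_n}\subset[0,1]^d$ and $S_{\alpha_n}\subset A_n$; hence, by Lemma \ref{lem:simplex}, any $d$ (or fewer) of the sets $A_n$ share a point $P$ lying in the cube, and because $P$ lies in the cube it lies in every member of $\mathcal B$. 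Thus any collection of at most $d$ sets from $\mathcal A$ together with \emph{any} number of sets from $\mathcal B$ has a common point. (3) For any finite subfamily $A_{n_1},\dots,A_{n_a}$ of $\mathcal A$, the point $Ne_1$ with $N=\max_i n_i$ belongs to $I_{n_i}\subset A_{n_i}$ for every $i$, so any number of sets of $\mathcal A$ have a common point out on the ray (which, being unbounded, typically escapes the sets of $\mathcal B$).

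With these in hand, verifying the $(d+1+2k,\,d+1+k)$-property becomes a counting argument. Given any $p=d+1+2k$ members of $\F$, write $a$ for the number of them lying in $\mathcal A$ and $b=p-a$ for the number in $\mathcal B$, and set $q=d+1+k$. I would split into three cases according to $a$. If $a\ge q$, fact (3) already gives $q$ sets of $\mathcal A$ with a common point. If $a\le d$, fact (2) gives a common point of all $p\ge q$ chosen sets inside the cube. The remaining case is $d<a<q$, i.e.\ $d+1\le a\le d+k$; here $b=p-a\ge (d+1+2k)-(d+k)=k+1$, so choosing $d$ of the $\mathcal A$-sets together with all $b$ sets of $\mathcal B$ yields, by fact (2), a common point for $d+b\ge d+k+1=q$ of them. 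These three cases exhaust every split $(a,b)$, so the property follows for all $k\ge 0$.

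The substance of the argument is really facts (2) and (3), which is where Lemma \ref{lem:simplex} and the placement of the rays do the work; the case analysis is then pure bookkeeping. I expect the only delicate point to be confirming that the cube intersection genuinely transfers to \emph{every} set of $\mathcal B$, i.e.\ that the common point produced by Lemma \ref{lem:simplex} actually lies in $[0,1]^d$ and not merely in the affine hull of the simplices. This holds because each vertex of $S_{\alpha_n}$ has all coordinates in $\{0,\alpha_n,1\}\subset[0,1]$, whence $S_{\alpha_n}\subset[0,1]^d$ and the common point inherits this containment. Note also that only the positive (lower-bound) intersection statements are needed, so I never have to argue that \emph{no} $d+1$ simplices meet; any additional coincidences among the $A_n$ would only make the property easier to satisfy.
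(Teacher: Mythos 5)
Your proposal is correct and follows essentially the same route as the paper: the identical three-way case split on the number $a$ of chosen sets from $\mathcal A$ (namely $a\le d$, $d+1\le a\le d+k$, and $a\ge d+1+k$), with Lemma \ref{lem:simplex} supplying the intersection inside the cube and the rays supplying the intersection along $\langle e_1\rangle$. The only difference is that you spell out explicitly why the common point of the simplices lies in $[0,1]^d$ and hence in every member of $\mathcal B$, a detail the paper leaves implicit.
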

\begin{proof}
Take $d+1+2k$ sets in $\F$ and let $\mathcal A'$ and $\mathcal B'$ be the families of the ones in $\mathcal A$ and in $\mathcal B$, respectively. Let $i=\lvert\mathcal A'\rvert$, so $d+1+2k-i=\lvert\mathcal B'\rvert$. We consider three cases:
\begin{enumerate}
\item If $i\le d$ then, by Lemma \ref{lem:simplex}, all the elements in $\mathcal A'\cup\mathcal B'$ intersect.
\item If $d+1\le i\le d+k$ then, by Lemma \ref{lem:simplex}, $d$ of the elements in $\mathcal A'$ and all the elements of $\mathcal B$ intersect. This gives $d+d+1+2k-i\ge d+1+k$ intersecting sets.
\item If $d+1+k\le i$ then all elements in $\mathcal A'$ intersect at some point in $\langle e_1\rangle$.
\end{enumerate}
In all cases there are at least $d+1+k$ intersecting sets.
\end{proof}

\section{Proof of Theorem \ref{thm:s2}}\label{section-disjuntos}

Let us start this section with the infinite version of the Hadwigwer-Debrunner $(p,q)$-theorem proved by Alon and Kleitman \cite{AK}.
 
\begin{theorem}[Alon, Kleitman 1992]\label{thm-AKinfinito}
Let $p$, $q$ and $d$ be positive integers with $p\geq q \geq d+1$. Then there exist a number $\xi(p,q,d)$ such that any infinite family $\F$ of closed convex sets in $\R^{d}$ satisfying the $(p,q)$-property can be partitioned into $\xi (p,q,d)$ subfamilies each satisfying the $(d+1,d+1)$-property.
\end{theorem}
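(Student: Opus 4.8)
The plan is to deduce this infinite statement from the finite $(p,q)$-theorem (Theorem \ref{thm-alonkleitman}) by a compactness argument, taking $\xi(p,q,d)$ to be exactly the finite piercing bound. First I would reformulate the desired conclusion as a colouring problem. Partitioning $\F$ into $N$ subfamilies each with the $(d+1,d+1)$-property is the same as assigning a colour from $\{1,\dots,N\}$ to every member of $\F$ so that no $(d+1)$-tuple with empty intersection is monochromatic: a colour class has the $(d+1,d+1)$-property precisely when none of its $(d+1)$-subsets has empty common intersection. Thus the theorem is equivalent to bounding, by $\xi(p,q,d)$, the chromatic number of the $(d+1)$-uniform hypergraph $H$ on vertex set $\F$ whose edges are the $(d+1)$-tuples of sets with empty intersection.

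Next I would settle the finite case. If $\F'\subseteq\F$ is finite and (after enlarging it within $\F$ if necessary) has at least $p$ members, then $\F'$ inherits the $(p,q)$-property, so Theorem \ref{thm-alonkleitman} produces points $x_1,\dots,x_N$ with $N=\xi(p,q,d)$ piercing $\F'$. Colouring each set of $\F'$ by some index $i$ with $x_i$ in that set produces $N$ colour classes, each with a common point, so in particular no empty-intersection $(d+1)$-tuple is monochromatic. Hence every finite subfamily admits a valid $N$-colouring. Note that in the infinite family these common points may escape to infinity, as in Gr\"unbaum's example from the introduction, which is exactly why the limiting conclusion can only assert the $(d+1,d+1)$-property and not a common point in each class.

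Finally I would transfer colourability to all of $\F$ by compactness. Endow the space $\{1,\dots,N\}^{\F}$ of all colourings with the product topology, each finite factor being discrete; by Tychonoff's theorem this space is compact. For each edge $e$ of $H$, the set $C_e$ of colourings in which $e$ is not monochromatic depends only on the finitely many coordinates indexing $e$, so $C_e$ is clopen, hence closed. The family $\{C_e\}$ has the finite intersection property: given finitely many edges $e_1,\dots,e_m$, the finite collection of members of $\F$ they involve admits a valid $N$-colouring by the previous step, and extending it arbitrarily to the rest of $\F$ lands in $C_{e_1}\cap\dots\cap C_{e_m}$. Compactness then yields a colouring in $\bigcap_e C_e$, which is exactly a partition of $\F$ into $N=\xi(p,q,d)$ subfamilies each with the $(d+1,d+1)$-property. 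Equivalently, one can invoke the de Bruijn--Erd\H{o}s theorem on the chromatic number of $H$.

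I expect the main obstacle to lie in making the finite-to-infinite passage airtight rather than in any geometry. One must verify that the bound $N=\xi(p,q,d)$ is genuinely independent of the chosen finite subfamily (it is, since it comes from Theorem \ref{thm-alonkleitman}), deal with the trivially small subfamilies for which the $(p,q)$-property is not even defined, and keep track of the fact that the limit colouring only preserves the local, finitary constraints encoded by $H$, so that each resulting class is guaranteed to satisfy the $(d+1,d+1)$-property but need not have a common point.
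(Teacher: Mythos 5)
Your proposal is correct and follows essentially the same route as the paper: both derive the infinite statement from the finite $(p,q)$-theorem by taking $\xi(p,q,d)$ to be the finite piercing bound, colouring each set by a piercing point to settle finite subfamilies, and then passing to the limit by compactness (the paper uses K\"onig's infinite lemma for countable families and invokes a general compactness principle otherwise, while you apply Tychonoff/de Bruijn--Erd\H{o}s directly, which handles arbitrary cardinality in one step). Your attention to the small-subfamily issue and to the fact that only the finitary $(d+1,d+1)$-constraints survive the limit is exactly the right care to take.
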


\begin{proof}
If the family is countable, let $A_{1},\dots,A_{n}, \ldots$ be an enumeration of the members of $\F.$ Let $\F_{n}=\{A_{1},\dots,A_{n}\}$ and $V_{n}$ be the finite collection of all partitions of $\F_{n}$ into $\xi(p,q,d)$ subfamilies each satisfying the $(d+1,d+1)$-property. By \cite{AK}, for $n$ sufficiently big, $V_{n}$ is a non-empty finite set. Define a graph on $\bigcup_{1}^{\infty }V_{n}$ by inserting all edges $cc'$ such that $c\in V_{n}$ and $c'\in V_{n+1}$ is the restriction of $c$ to $\{A_{1},\dots,A_{n-1}\}$. By K\"{o}nig's infinite lemma, there is an infinite ray $c_{0}c_{1}\dots$ in this graph with $c_{i}\in V_{n_{0}+i}.$ Then $\bigcup_{1}^{\infty }c_{n}$\ is the desired partition. 

The proof for arbitrary infinite families follows directly from the compactness principle for partial selectors from finite sets. 
\end{proof}

We should point out that an infinite family of closed convex sets in $\R^d$ satisfying the $(d+1,d+1)$-property does not necessarily have a finite piercing number. In order to prove Theorem \ref{thm:s2} we will need the following lemma.

\begin{lemma}\label{lem:critical}
Let $p \ge q \ge d+1$ be positive integers and $\F$ be an infinite family of closed convex sets in $\R^d$ satisfying the $(d+1,d+1)$-property. Also, let $\mathcal{B}$ be a $(q-d)$-free family of size $p-d$. If $\F\cup \mathcal{B}$ satisfies the $(p,q)$-property then $\pi (\F)=1$.
\end{lemma}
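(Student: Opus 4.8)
The plan is to reduce the statement to the infinite Helly theorem (Theorem~\ref{thm:helly}) by adjoining a single compact convex set to $\F$ that ``anchors'' the intersections of $\F$ inside a bounded region. Set $K = \conv\!\left(\bigcup_{B\in\mathcal B} B\right)$, which is compact and convex because $\mathcal B$ is a finite family of compact sets. I will show that $\mathcal H := \F \cup \{K\}$ satisfies the $(d+1,d+1)$-property; since $K$ is bounded and $\mathcal H$ is infinite, Theorem~\ref{thm:helly} then yields a point lying in every member of $\mathcal H$, in particular a point piercing all of $\F$, so that $\pi(\F)=1$.

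The heart of the argument is the following claim: every $d$ sets $F_1,\dots,F_d \in \F$ satisfy $F_1\cap\dots\cap F_d\cap K \neq\emptyset$. To prove it, I apply the $(p,q)$-property of $\F\cup\mathcal B$ to the $p$ sets $\{F_1,\dots,F_d\}\cup\mathcal B$ (recall that $\lvert\mathcal B\rvert = p-d$, so this is exactly $p$ sets). This produces $q$ of them sharing a common point $x$. Write $q = a+b$, where $a$ of the intersecting sets lie in $\{F_1,\dots,F_d\}$ and $b$ lie in $\mathcal B$. The $b$ sets from $\mathcal B$ all contain $x$, so by the $(q-d)$-freeness of $\mathcal B$ we get $b\le q-d$; hence $a\ge d$, forcing $a=d$ and $b=q-d\ge 1$ (using $q\ge d+1$). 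Thus all of $F_1,\dots,F_d$ pass through $x$, and $x$ also lies in at least one member of $\mathcal B$, whence $x\in K$. This proves the claim.

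With the claim in hand, verifying the $(d+1,d+1)$-property of $\mathcal H$ is immediate: any $d+1$ members of $\mathcal H$ are either $d+1$ sets of $\F$ — which intersect by the $(d+1,d+1)$-property of $\F$ — or $d$ sets of $\F$ together with $K$, which intersect by the claim. Applying Theorem~\ref{thm:helly} to the infinite family $\mathcal H$, one of whose members ($K$) is bounded, produces a common piercing point, which then lies in $\bigcap_{F\in\F}F$, giving $\pi(\F)=1$.

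The step I expect to be most delicate is the counting in the claim, and in particular making sure the roles of $p$, $q$, and $d$ line up: the identity $\lvert\mathcal B\rvert = p-d$ is what makes $\{F_1,\dots,F_d\}\cup\mathcal B$ have exactly $p$ members, and $(q-d)$-freeness is precisely strong enough to force all $d$ of the $F_i$ into the intersecting subfamily while still leaving the $q-d\ge 1$ sets of $\mathcal B$ that place the common point inside $K$. One should also confirm the harmless bookkeeping that the chosen $p$ sets are genuinely distinct members of $\F\cup\mathcal B$, which is automatic since $\F$ is infinite and $\mathcal B$ finite.
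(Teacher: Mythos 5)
Your proof is correct and follows essentially the same route as the paper's: both adjoin the compact convex hull of $\mathcal B$ to $\F$, apply the $(p,q)$-property to $d$ sets of $\F$ together with all $p-d$ members of $\mathcal B$, and use $(q-d)$-freeness to force the intersecting $q$-tuple to contain all $d$ chosen sets plus at least one member of $\mathcal B$, before concluding with Helly's theorem.
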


\begin{proof}
Let $B$ be the convex hull of the sets in $\mathcal{B}$. By Helly's theorem, it will be enough to prove that $\F\cup \{B\}$ satisfies the $(d+1,d+1)$-property. Let $\F'$ be any subset of size $d$ of $\F.$ Consider the subfamily $\F'\cup\mathcal{B}$, of size $p$. Since $\F\cup \mathcal{B} $ satisfies the $(p,q)$-property, among these sets there is an intersecting $q$-tuple. Note that if this $q$-tuple does not contain $\F'$, then it has at least $q-d+1$ elements of $\mathcal{B}$, contradicting the fact that $\mathcal{B}$ is $(q-d)$-free. Since the $q$-tuple contains $\F'$ and at least one element of $\mathcal{B}$, we conclude that $\F' \cup \{ B\}$ is an intersecting family of size $d+1$, as we wanted.
\end{proof}

\begin{proof}[Proof of Theorem \ref{thm:s2}] Let $\F$ be an infinite family of closed convex sets in $\R^d$ satisfying the $(p,q)$-property with a $(q-d)$-free family $\mathcal{B}$ of size $p-d$.

Consider the family $\widehat{\F}=\F\setminus\mathcal{B}$. By definition $\widehat{\F}$ satisfies the $(p,q)$-property and, by Theorem \ref{thm-AKinfinito}, $\widehat{\F}=\F_{1}\cup \F_{2}\dots\cup \F_{K'}$ such that each $\F_{i}$ satisfies the $(d+1,d+1)$-property, where $K'=\xi (p,q,d)$. Now, by Lemma \ref{lem:critical} each subfamily $\F_{i}$ can be pierced with one point, hence $\pi (\mathcal{F)\leq }\xi (p,q,d)+p-d.$ However, if we take any $d$ elements of $\widehat{\F}$ and $\mathcal{B}$, the intersecting $q$-tuple must contain at least $q-d$ elements of $\mathcal{B}$. Thus to pierce $\mathcal{B}$ at most $p-q+1$ points are needed. Thus $\pi (\F) \le \xi(p,q,d) + p-q+1$.
\end{proof}

\section{Erd\H{o}s--Gallai Theory}\label{section-erdosgallai}

A vertex and an edge are said to \emph{cover} each other in a $\lambda$-hypergraph $G^{\lambda}$ if they are incident in $G^{\lambda}$. A \emph{vertex cover} in $G^{\lambda}$ is a set of vertices that covers all the edges of $G^{\lambda}$. The minimum cardinality of a vertex cover in $G^{\lambda}$ is called the \emph{vertex covering number} or \emph{transversal number} of $G^{\lambda}$ and is denoted by $\beta (G^{\lambda})$. For $\lambda \geq 2$, a $\lambda $-hypergraph with no isolated vertices $G^{\lambda}$ is called \emph{$k$-critical} if $\beta (G^{\lambda})=k$ and its transversal number decreases whenever an edge is deleted from $E(G^{\lambda})$. The study of $k$-critical $\lambda $-hypergraphs was initiated in 1961 with a paper of Erd\H{o}s and Gallai \cite{AG} in which they studied the maximum number of vertices $\eta (\lambda,k)$ that a $k$-critical $\lambda $-hypergraph $G^{\lambda}$ can have. Later, in 1989, Z.\ Tuza \cite{T} found sharp bounds for $\eta (\lambda,k)$. See \cite{AG1} for an excellent survey on the Erd\H{o}s--Gallai theory. Throughout the rest of the paper, the number $\eta(\lambda ,k)$ will be called the Erd\H{o}s--Gallai bound. Furthermore, Erd\H{o}s and Gallai's theorem can be restated as the following Helly type theorem for transversal numbers in hypergraphs.

\begin{theorem}[Erd\H{o}s, Gallai]\label{thm:eg}
Let $G^{\lambda}$ be a $\lambda$-hypergraph. Then $\beta (G^{\lambda})\leq k$ if and only if $\beta (H^{\lambda})\leq k$ for every $H^{\lambda}$ subgraph of $G^{\lambda}$ with $\lvert V(H^{\lambda})\rvert$ $\leq \eta (\lambda, k+1)$.
\end{theorem}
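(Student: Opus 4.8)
The plan is to prove the two implications separately; the forward direction is immediate, and the whole content sits in the reverse direction, which I would obtain by extracting a critical subhypergraph. First I would dispose of the ``only if'' direction: if $\beta(G^{\lambda})\le k$, fix a vertex cover $C$ of $G^{\lambda}$ with $\lvert C\rvert\le k$. Since every edge of any subgraph $H^{\lambda}$ is also an edge of $G^{\lambda}$, the set $C\cap V(H^{\lambda})$ still covers every edge of $H^{\lambda}$, so $\beta(H^{\lambda})\le k$; this holds for all subgraphs, in particular those with $\lvert V(H^{\lambda})\rvert\le\eta(\lambda,k+1)$.

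For the ``if'' direction I would argue by contraposition: assuming $\beta(G^{\lambda})\ge k+1$, I want to produce a subgraph $H^{\lambda}$ with $\lvert V(H^{\lambda})\rvert\le\eta(\lambda,k+1)$ and $\beta(H^{\lambda})\ge k+1$, contradicting the hypothesis. The key step is to choose $H^{\lambda}$ to be an \emph{edge-minimal} subhypergraph of $G^{\lambda}$ still satisfying $\beta\ge k+1$, and then discard any isolated vertices (so $H^{\lambda}$ automatically has none). By minimality, deleting any edge $e$ gives $\beta(H^{\lambda}-e)\le k$. Coupling this with the elementary monotonicity $\beta(H^{\lambda})\le\beta(H^{\lambda}-e)+1$ --- obtained by adjoining a single vertex of $e$ to a minimum cover of $H^{\lambda}-e$ --- forces $\beta(H^{\lambda})=k+1$ and $\beta(H^{\lambda}-e)=k$ for every edge $e$. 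Hence $H^{\lambda}$ is precisely a $(k+1)$-critical $\lambda$-hypergraph with no isolated vertices, so by the very definition of the Erd\H{o}s--Gallai bound it has at most $\eta(\lambda,k+1)$ vertices. Since $\beta(H^{\lambda})=k+1>k$, this contradicts the assumption that every subgraph on at most $\eta(\lambda,k+1)$ vertices has transversal number $\le k$, completing the contrapositive.

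The step I expect to be the main obstacle is guaranteeing the existence of the edge-minimal subhypergraph with $\beta\ge k+1$ when $G^{\lambda}$ is infinite; for finite $G^{\lambda}$ it is immediate, since one minimizes over a finite set of edge subsets. In the infinite case I would first reduce to a finite subhypergraph with $\beta\ge k+1$ by a compactness argument: if, on the contrary, every finite subhypergraph admitted a cover of size $\le k$, a K\"onig's-lemma/Tychonoff patching identical in spirit to the proof of Theorem \ref{thm-AKinfinito} would assemble these into a cover of size $\le k$ for all of $G^{\lambda}$, contradicting $\beta(G^{\lambda})\ge k+1$. Once a finite witness is secured, edge-minimality holds vacuously, and the remaining monotonicity check is routine. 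I would therefore expect the proof to hinge entirely on the clean identification of the extracted object as a $(k+1)$-critical hypergraph, with the vertex bound then delivered for free by the definition of $\eta(\lambda,k+1)$.
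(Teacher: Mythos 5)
The paper does not actually prove this theorem --- it presents it as a restatement of the Erd\H{o}s--Gallai result on $k$-critical hypergraphs, with the infinite case dispatched by a one-line appeal to K\"onig's lemma --- and your argument is precisely the standard proof behind that restatement: pass to a finite subhypergraph with $\beta\ge k+1$ by compactness, extract an edge-minimal one, observe it is $(k+1)$-critical, and invoke the definition of $\eta(\lambda,k+1)$. Your proof is correct and consistent with the paper's intent; the only detail worth adding is that in the K\"onig/compactness step one should restrict candidate $k$-covers of a finite subhypergraph to vertices lying in its edges, so that each level of the tree is genuinely finite.
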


Clearly, by K\"{o}nig's infinite lemma or Tychonoff's theorem, we have that Theorem \ref{thm:eg} holds even if $G$ has an infinity number of vertices.

Let $\F$ be a family of closed convex sets in $\R^{d}$. As in \cite{OM}, we define a $(d+1)$-hypergraph $G_{\F}$ with vertex set $\F$ and $d+1$ convex sets of $\F$ define an edge of $G_{\F}$ if and only if their intersection is empty. We use the Erd\H{o}s--Gallai theorem to prove Theorem \ref{thm:s1}. See Theorem 3.1 of \cite{OM}.

\begin{proof}[Proof of Theorem \ref{thm:s1}] The condition that $\F$ satisfies the $(p,p-t)$-property, for $p\geq \eta (d+1,t+1)$ implies that $\beta (H)\leq t$, for every $H$ subgraph of $G_{\F}$ with $\lvert V(H)\rvert\leq \eta (d+1,t+1)$. The Erd\H{o}s--Gallai theorem implies that there is a transversal $\{A_{1},\dots,A_{t}\}$ $\subset \F$ to all edges of $G_{\F}$. So, by definition of $G_{\F}$, the family of closed convex sets in $\R^{d}$, $\F\setminus \{A_{1},\dots,A_{t}\}$ satisfies the $(d+1,d+1)$-property. The fact that $\F$ contains at least $t+1$ bounded members implies that at least one member of $\F\setminus \{A_{1},\dots,A_{t}\}$ is bounded, thus we can pierce $\F$ with $t+1$ points. 
\end{proof}

For example, using  that $\eta (n,2)\leq \lfloor (\frac{n}{2})^{2}\rfloor $, proved by  Erd\H{o}s and Gallai in \cite{AG}, (see also \cite{OM}),  plus Theorem \ref{thm:s1}, we have  that in the plane, a family of closed convex sets containing two bounded members and satisfying the $(6,5)$-property can be pierced with two points. Also, a family of closed convex sets in $3$-space containing two bounded members and satisfying the $(9,8)$-property can be pierced with two points.

\section{Proof of Theorem \ref{thm:main}}\label{section-mainthm}

In order to prove our main theorem we require some preliminary lemmas.

\begin{lemma}\label{lem:v}
Let $\F$ be an infinite family of closed sets in $\R^{d}$ satisfying the $(d+1,d+1)$-property and such that $\bigcap \{A \mid A\in \F\}=\emptyset$. Then, there is a unit vector $v$ such that for every $a\in A\in \F$, we have $\{a+tv\mid t\geq 0\}\subseteq A.$
\end{lemma}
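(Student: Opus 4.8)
The plan is to take $v$ to be a common recession direction of all the sets in $\F$. Recall that for a nonempty closed convex set $A\subseteq\R^d$ its recession cone $0^+A=\{v : a+tv\in A \text{ for all } a\in A,\ t\ge 0\}$ is a closed convex cone which reduces to $\{0\}$ exactly when $A$ is bounded, and that the conclusion of the lemma is precisely the assertion that some unit vector lies in $\bigcap_{A\in\F}0^+A$. Thus the whole proof amounts to producing a single nonzero vector in this intersection of recession cones. As a first bookkeeping step I would record that every finite subfamily of $\F$ has nonempty intersection: since $\F$ has the $(d+1,d+1)$-property every $d+1$ of its members meet, and the finite Helly theorem then gives a common point for any finite subcollection.

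The crucial step is to show that \emph{every} finite intersection $\bigcap\mathcal{G}$, over a finite $\mathcal{G}\subseteq\F$, is unbounded. Suppose to the contrary that some $\bigcap\mathcal{G}_0$ is bounded, hence compact. Then the sets $\{(\bigcap\mathcal{G}_0)\cap A : A\in\F\}$ are all compact and, by the previous paragraph, enjoy the finite intersection property; compactness would then force their total intersection to be nonempty. But that total intersection equals $(\bigcap\mathcal{G}_0)\cap\bigcap\F=\emptyset$, contradicting the hypothesis $\bigcap\F=\emptyset$. Hence each $\bigcap\mathcal{G}$ is a nonempty \emph{unbounded} closed convex set, so $0^+(\bigcap\mathcal{G})\neq\{0\}$; invoking the standard identity $0^+(\bigcap\mathcal{G})=\bigcap_{A\in\mathcal{G}}0^+A$ (valid because the intersection is nonempty), this says every finite subfamily of the recession cones shares a nonzero direction.

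Finally I would pass to the unit sphere. Set $C_A=0^+A\cap S^{d-1}$, a closed subset of the compact space $S^{d-1}$. The previous step shows that the family $\{C_A\}_{A\in\F}$ has the finite intersection property, so compactness of $S^{d-1}$ produces a point $v\in\bigcap_{A\in\F}C_A$. This $v$ is a unit vector lying in $0^+A$ for every $A\in\F$, which is exactly the stated conclusion that $\{a+tv : t\ge 0\}\subseteq A$ for all $a\in A\in\F$.

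I expect the middle step to be the main obstacle, since it is the only place where the hypothesis $\bigcap\F=\emptyset$ must be converted into a statement about unboundedness, and where one must be careful to cover the case $\mathcal{G}_0=\{A\}$ (so that $\F$ contains no compact member at all). Everything else is routine: the recession-cone calculus ($0^+A=\{0\}\iff A$ bounded and the commutation of $0^+$ with nonempty intersections) together with a compactness argument on $S^{d-1}$.
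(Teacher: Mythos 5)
Your proposal is correct and follows essentially the same route as the paper: the sets $C_A=0^+A\cap S^{d-1}$ are exactly the paper's $C(A)$, the unboundedness of finite intersections is established by the same compactness contradiction with $\bigcap\F=\emptyset$, and the conclusion comes from the finite intersection property on the compact sphere. The only cosmetic difference is that you invoke the identity $0^+\bigl(\bigcap\mathcal{G}\bigr)=\bigcap_{A\in\mathcal{G}}0^+A$ where the paper only needs the easy inclusion $C\bigl(\bigcap\F'\bigr)\subseteq\bigcap_{A\in\F'}C(A)$.
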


\begin{proof}
Let $a\in A\subset \R^{d},$ where $A$ is an unbounded closed convex set. We define $C_{a}(A):=\{v\in S^{d-1}\mid \{a+tv\mid t\geq 0\}\subset A\}.$ By convexity and since $A$ is closed, this non-empty compact set is the same for every $a\in A,$ so we shall denoted it by $C(A).$ \ Note that if $B$ is a closed convex set and $A\subseteq B$, then $C(A)\subseteq C(B).$

Let $\F' \subset \F$ be a finite subset. 
By Helly's theorem $\bigcap\{A \mid A\in \F'\}$ is a non-empty unbounded closed convex set, otherwise $\bigcap \{A \mid A\in \F\} \neq \emptyset$. Therefore, $C(\bigcap\{A \mid A\in \F'\}) \subset \bigcap\{C(A ) \mid A\in \F'\}$ is non-empty. This implies that the family $\{ C(A) \mid A \in \F\}$, of compact subsets of $S^{d-1}$ has the finite intersection property. Therefore, $\bigcap\{C(A)\mid A\in F\} \neq \emptyset$ which conclude the proof of the lemma. 
\end{proof}

We have the following immediate corollary. 

\begin{corollary}\label{cor}
Let $\F$ be an infinite family of closed sets in $\R^{d}$ satisfying the $(d+1,d+1)$-property and such that $\bigcap \{A \mid A\in \F\}=\emptyset$. Let $B$ be a bounded closed convex subset of $\R^{d}$. Suppose without loss of generality that $v=(0,\dots,0,1)\in\R^{d}$ is the unit vector described in Lemma \ref{lem:v} for the family $\F$. Let $\F' \subset \F$, then $\bigcap \{A\cap B\mid A\in \F'\} \neq \emptyset$ if and only if $\bigcap \{\Pi(A\cap B)\mid A\in \F'\} \neq \emptyset$, where $\Pi :\R^{d}\to\R^{d-1}$ is the orthogonal projection. 
\end{corollary}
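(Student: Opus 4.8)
The plan is to prove Corollary \ref{cor} as a direct consequence of Lemma \ref{lem:v}. The key geometric observation is that, after the normalization $v=(0,\dots,0,1)$, every set $A\in\F$ is \emph{upward-closed}: whenever $a\in A$, the entire vertical ray $\{a+tv\mid t\ge 0\}$ lies in $A$. Intersecting with a fixed bounded closed convex set $B$ truncates these rays from above, so each $A\cap B$ becomes a convex set that is ``as tall as possible'' over its shadow $\Pi(A\cap B)$ up to the ceiling imposed by $B$. This is exactly the structure that makes the projection $\Pi$ reversible on intersections.

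First I would dispatch the easy implication: if $\bigcap\{A\cap B\mid A\in\F'\}\neq\emptyset$, then any point in the common intersection projects into every $\Pi(A\cap B)$, so $\bigcap\{\Pi(A\cap B)\mid A\in\F'\}\neq\emptyset$ trivially, requiring no hypotheses at all. For the converse I would take a point $x\in\R^{d-1}$ lying in $\bigcap\{\Pi(A\cap B)\mid A\in\F'\}$ and try to lift it to a single point of $\R^d$ sitting in every $A\cap B$. For each $A\in\F'$, the fiber $\Pi^{-1}(x)\cap(A\cap B)$ is a nonempty (since $x\in\Pi(A\cap B)$) closed bounded vertical segment, say with top endpoint at height $h_A(x)$; because $A$ is upward-closed, every point of $A$ on the fiber below $h_A(x)$ and above the bottom of $B$ also lies in $A$. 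The candidate lift is the point over $x$ at the height given by the \emph{lowest} ceiling $\min_{A\in\F'}h_A(x)$, which is realized because $\F'$ is finite.

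The step I expect to be the main technical point is verifying that this common height lies simultaneously in every fiber $\Pi^{-1}(x)\cap(A\cap B)$. Let $A^*$ achieve the minimum ceiling. The lifted point at height $h_{A^*}(x)$ lies in $A^*\cap B$ by construction. For any other $A\in\F'$, the fiber over $x$ meets $A$ in a segment whose top is at height $h_A(x)\ge h_{A^*}(x)$; since $A$ is upward-closed and the chosen height does not exceed $h_A(x)$, one must check the point is still in $A$. This uses that the fiber intersected with the convex set $A$ is an interval whose lower endpoint is at most $h_{A^*}(x)$ --- and here the upward-closure is decisive, because it forces the bottom of the $A$-fiber to be no higher than the bottom of the $A^*$-fiber (both segments reach arbitrarily high inside the respective unbounded sets before truncation by $B$, so their overlap is an interval ending at $h_{A^*}(x)$). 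Containment in $B$ is immediate since $B$ is convex and the height is squeezed between the bottom of $B$ and $h_{A^*}(x)\le$ top of $B$. Assembling these checks shows the lifted point lies in $\bigcap\{A\cap B\mid A\in\F'\}$, completing the nontrivial direction.

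I would keep the argument short by emphasizing that the only substantive input is the monotonicity furnished by Lemma \ref{lem:v}: without the common recession direction $v$ the fibers would not align, and the projection could easily lose intersection information. Thus the corollary is genuinely ``immediate'' given the lemma, and the write-up should foreground the upward-closure property and the finiteness of $\F'$ (needed to take a minimum height) as the two facts doing all the work.
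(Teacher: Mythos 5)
Your argument is correct and is essentially the paper's proof: the forward direction is trivial, and for the converse the paper lifts a point $x$ of the common projection to $(x,t)$ with $t\ge t_A$ for every $A\in\F'$ and $(x,t)\in B$, which is exactly your highest-admissible-point-of-the-fiber construction, resting on the upward-closure from Lemma \ref{lem:v} and the finiteness of $\F'$. One small remark: your intermediate claim that the bottom of the $A$-fiber is no higher than the bottom of the $A^*$-fiber is neither needed nor true in general; what actually closes the argument is your parenthetical observation that every fiber of $A\cap B$ over $x$ reaches the top of $B$'s fiber, so all the ceilings $h_A(x)$ coincide and the chosen height lies in each (nonempty, closed) fiber.
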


\begin{proof}
Let $x\in \bigcap \{\Pi(A\cap B)\mid A\in \F'\}$, then for every $A\in \F'$, there is $t_A \in \R$, such that $(x,t_A)\in A\cap B$. Let $t$ be a real number such that $t \ge t_A$, for every $A\in \F'$, and $(x,t) \in B$. Then, by Lemma \ref{lem:v}, $(x,t)\in \bigcap\{A\cap B\mid A\in \F'\}$.
\end{proof}

\begin{lemma}\label{lem:pixi}
Let $p \geq q \geq d+1$ be positive integers where $q \geq p-q +(d+1)$. Let $\F$ be a family of closed sets in $\R^{d}$ satisfying the $(d+1,d+1)$-property, and let $B_{1},\dots,B_{p-q+1}$ be compact convex sets such that $\F\cup \{B_{1},\dots,B_{p-q+1}\}$ satisfies the $(p,q)$-property. Then $$\pi(\F)\leq \xi(q-1,d,d-1),$$ where $\xi (p,q,d)$ are the $(p,q)$ theorem bounds.
\end{lemma}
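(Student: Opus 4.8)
The plan is to reduce the problem by one dimension via the orthogonal projection $\Pi\colon\R^d\to\R^{d-1}$, exploiting that the sets of $\F$ are invariant under upward translation. First I would dispose of the trivial case: if $\bigcap\{A\mid A\in\F\}\neq\emptyset$ then $\pi(\F)=1$ and there is nothing to prove. So assume this intersection is empty; by Lemma \ref{lem:v} there is a unit vector which, after a rotation, we may take to be $v=(0,\dots,0,1)$, so that every $A\in\F$ is closed under the translations $a\mapsto a+te_d$ with $t\ge 0$. Fix a ball large enough to contain $B_1\cup\dots\cup B_{p-q+1}$, call it $B$, and set $M=\max\{x_d\mid x\in B\}$.

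The core step is to show that the projected family $\{\Pi(A\cap B)\mid A\in\F\}$, whose members are compact, satisfies the $(q-1,d)$-property in $\R^{d-1}$. Given any $A_1,\dots,A_{q-1}\in\F$, I would apply the $(p,q)$-property to the $p$-tuple $\{A_1,\dots,A_{q-1},B_1,\dots,B_{p-q+1}\}$, obtaining an intersecting $q$-subtuple with a common point $x$. Since at most $p-q+1$ of these sets are among the $B_i$, the subtuple contains at least $q-(p-q+1)=2q-p-1$ of the $A_j$; the hypothesis $q\ge p-q+(d+1)$ is exactly what forces this number to be at least $d$. Moreover the subtuple cannot consist solely of $A_j$'s, as only $q-1$ are available, so it contains some $B_i$, whence $x\in B_i\subseteq B$. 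Therefore $\Pi(x)$ lies in at least $d$ of the sets $\Pi(A_j\cap B)$, giving the desired $(q-1,d)$-property.

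With this in hand I would invoke the infinite $(p,q)$-theorem (Theorem \ref{thm-AKinfinito}) in dimension $d-1$ with parameters $(q-1,d)$ to partition $\{\Pi(A\cap B)\mid A\in\F\}$ into $\xi(q-1,d,d-1)$ subfamilies, each satisfying the $(d,d)$-property there. Since all projected sets are compact, the $(d,d)$-property is precisely Helly's hypothesis in $\R^{d-1}$, so each such subfamily has a common point $y_i$. This induces a partition of $\F$ into $\xi(q-1,d,d-1)$ classes $\F_i$, where $\F_i$ consists of the sets $A$ whose projection $\Pi(A\cap B)$ contains $y_i$.

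It remains to lift each $y_i$ to a single piercing point of $\F_i$ in $\R^d$, which is where the upward-invariance and the boundedness of $B$ are used. For $A\in\F_i$ the relation $y_i\in\Pi(A\cap B)$ yields some $t_A\le M$ with $(y_i,t_A)\in A$, and then Lemma \ref{lem:v} gives $(y_i,M)\in A$ as well. Hence the single point $(y_i,M)$ pierces every member of $\F_i$, and the $\xi(q-1,d,d-1)$ points $(y_1,M),\dots,(y_{\xi(q-1,d,d-1)},M)$ pierce all of $\F$. I expect the main obstacle to be exactly this lifting step: one must produce a common height valid for an entire (possibly infinite) class simultaneously, and it is the uniform bound $M$ coming from the compact set $B$, essentially the content of Corollary \ref{cor}, that makes this possible.
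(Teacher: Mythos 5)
Your proof is correct and follows essentially the same route as the paper's: reduce to the case $\bigcap\{A\mid A\in\F\}=\emptyset$, fix the recession direction via Lemma \ref{lem:v}, intersect with a compact convex set containing the $B_i$, establish the $(q-1,d)$-property of the projected family by the same counting argument, and apply the $(p,q)$-theorem in $\R^{d-1}$. Your explicit lifting of the piercing points to a uniform height $M$ is precisely what the paper packages as Corollary \ref{cor}.
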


\begin{proof}
We may assume that $\bigcap \{A\mid A\in \F\}=\emptyset $, otherwise there is nothing to prove. Suppose without loss of generality that $v=(0,\dots,0,1)\in\R^{d}$ is the unit vector described in Lemma \ref{lem:v} for the family $\F$. We denote by $B$ the convex hull of $\{B_{0},\dots,B_{p-q+1}\}$. Let us consider the following two families of compact convex sets: $B(\F)=\{A\cap B \mid A\in \F \}$, and $\Pi(\F)=\{\Pi(A\cap B) \mid A\in \F \}$ where $\Pi :\R^{d}\to\R^{d-1}$ is the orthogonal projection. Note that while $B(\F)$ is a family of sets in $\R^{d}$, $\Pi(\F)$ is a family of sets in $\R^{d-1}$. However, by Corollary \ref{cor} both families have the same pattern of intersection. 

We shall prove now that the family $B(\F)$ satisfies the $(q-1, d)$-property. 
For that purpose we use the fact that $\F\cup \{B_{1},\dots,B_{p-q+1}\}$ satisfies the $(p,q)$-property, and that $ p-q +d \leq q-1$. For a $q-1$-tuple $\{A_1\cap B,\dots,A_{q-1}\cap B \}$ of $B(\F)$ consider $\{A_1,\dots,A_{q-1},B_{1},\dots,B_{p-q+1}\}$, a $p$-tuple of $\F$. 
The fact that $\F\cup \{B_{1},\dots,B_{p-q+1}\}$ satisfies the $(p,q)$-property and that $ p-q +d \leq q-1$ implies that among the $q-1$ sets, $\{A_1\cap B,\dots,A_{q-1}\cap B \}$, there are $d$ of them with a point in common. Therefore, the family $B(\F)$ satisfies the $(q-1, d)$-property. 

By Corollary \ref{cor}, this implies that the family $\Pi(\F)$, of compact convex subsets of $\R^{d-1}$, satisfies the $(q-1, d)$-property. By the Alon Kleitman theorem, the family $\Pi(\F)$ can be pierced with $\xi(q-1,d,d-1)$ points, but again by Corollary \ref{cor}, the same is true for the family $B(\F)$ and of course for the family $\F$.
\end{proof}

\begin{proof}[Proof of Theorem 1.4, i)]
Let $B_{1},\dots,B_{p-q+1}$ be $p-q+1$ bounded members of $F$ and consider the family $\widehat{\F}=\F\setminus \{B_{1},\dots,B_{p-q+1}\}$. By definition $\widehat{\F}$ satisfies the $(p,q)$-property and, by Theorem \ref{thm-AKinfinito}, $\widehat{\F}=\F_{1}\cup \F_{2}\cup\dots\cup \F_{K}$ such that each $\F_{i}$ satisfies the $(d+1,d+1)$-property, where $K=\xi (p,q,d)$. Now, by Lemma \ref{lem:pixi} each subfamily $\F_{i}$ can be pierced with $\xi(q-1,d,d-1)$ points, hence $\pi(\F)\leq \xi(q-1,d,d-1)\xi(p,q,d) +p-q+1$.
\end{proof}

\begin{proof}[Proof of Theorem 1.4, ii)] Just check that the family $\F$ constructed in Theorem \ref{thm:ctex} satisfies the desired properties.
\end{proof}

\bibliographystyle{amsplain}
\bibliography{unbounded}

\noindent A. Montejano, \\
\textsc{
UMDI-Juriquilla Facultad de Ciencias, \\
Universidad Nacional Aut\'onoma de M\'exico
}\\[0.3cm]
\noindent L. Montejano, E. Rold\'an-Pensado \\
\textsc{
Unidad-Juriquilla, Quer\'etaro \\
Instituto de Matem\'aticas, \\
Universidad Nacional Aut\'onoma de M\'exico}\\[0.3cm]
\noindent P. Sober\'on \\
\textsc{
Mathematics Department, \\
University of Michigan, \\
Ann Arbor, MI 48109-1043
}\\[0.3cm]
\noindent \textit{E-mail addresses: }\texttt{amandamontejano@ciencias.unam.mx, luis@matem.unam.mx, e.roldan@im.unam.mx, psoberon@umich.edu}
\end{document}